\documentclass[11pt, reqno]{amsart}
\usepackage{amsmath}
\usepackage{amssymb}
\usepackage{esint, enumitem}
\usepackage[hidelinks]{hyperref}
\baselineskip=7.0mm
\setlength{\textwidth}{5.8in} \setlength{\textheight}{8.0in}
\hoffset=-0.45truein \voffset=0.1truein

\newtheorem{theorem}{Theorem}[section]
\newtheorem{lemma}[theorem]{Lemma}
\newtheorem{proposition}[theorem]{Proposition}

\newtheorem{problem}[theorem]{Problem}

\theoremstyle{definition}
\newtheorem{definition}[theorem]{Definition}

\theoremstyle{remark}
\newtheorem{remark}[theorem]{Remark}
\newtheorem{claim}{Claim}

\numberwithin{equation}{section}

\newcommand{\bb}[1]{\mathbb{#1}}
\newcommand{\innerproduct}[2]{\langle\, #1,#2 \,\rangle}

\allowdisplaybreaks[4]

\title[Hessian Estimates for the Sigma-2 Equation]{Hessian Estimates for the Sigma-2 Equation with Variable Right-Hand Side Terms in Dimension 4}

\date{\today}


\author{Zhenyu Fan}
\address{School of Mathematical Sciences, Peking University, Beijing, 100871, P. R. China}
\email{fanzhenyu@stu.pku.edu.cn}


\begin{document}

\subjclass[2010]{35B45;\,
                35B65;\,
                35J60;\,
                35J96. 
                }
\keywords{Sigma-2 Hessian equation; Interior estimates; Doubling.}

\begin{abstract}
    We derive a priori interior Hessian estimates and regularity  for the sigma-2 Hessian equation $\sigma_{2}(D^2u)=f(x,u,Du)$ with positive $C^{1,1}$ right hand side in dimension 4. In higher dimensions, the same result holds under an additional dynamic semi-convexity condition on solutions. This  generalizes Qiu's and Shankar-Yuan's results \cite{Qiu-sigma2Hessian-24, Shankar-Yuan-25}.
\end{abstract}

\maketitle

\section{Introduction}
The $\sigma_{k}\,$-Hessian equation
\[ \sigma_{k}(D^2u)=\sum_{1\leq i_{1}<\cdots<i_{k}\leq n}\lambda_{i_{1}}\cdots\lambda_{i_{k}}=1, \]
where $\lambda_{i}'s$ are eigenvalues of the Hessian $D^2u$, is a class of widely studied fully nonlinear elliptic equations due to its relatively simple structure and broad applications in geometry and other fields. For $k=1$, $\sigma_{1}(D^2u)=\Delta u=1$ reduces to the Laplace equation. For $k=n$, $\sigma_{n}(D^2u)=\det D^2u=1$ corresponds to the Monge-Amp\`ere equation which originates from optimal transport theory and prescribed Gaussian curvature problem.

 A priori estimates and regularity theory constitute a fundamental subject in the study of partial differential equations. It is well known that viscosity solutions to the Laplace equation $\Delta u=1$ are smooth (even analytic) and admit appropriate estimates.  For the Monge-Amp\`ere equation $\det D^2u=1$,  in two dimensions, Hessian estimates was first established by Heniz \cite{Heinz-59}. In higher dimensions $n\ge3$, Pogorelov's $C^{1,1-\frac{2}{n}}$ counterexamples \cite{Pogorelov-cg-71,Pogorelov-book} preclude establishing a priori interior Hessian estimates and $C^2$ regularity.  Later, Urbas \cite{Urbas} extended Pogorelov's counterexamples to $\sigma_{k}\,$-Hessian equations with $k\ge 3$. This leaves a longstanding open problem:

 \begin{problem}
        For $n\ge3$, whether a priori interior Hessian estimates and regularity can be established for the $\sigma_{2}\,$-Hessian equation $\sigma_{2}(D^2u)=1$?
    \end{problem}

Now this problem is resolved in dimensions $n=3$ by Warren-Yuan \cite{Warren-Yuan-3d-CPAM}, $n=4$ by Shankar-Yuan \cite{Shankar-Yuan-25} and remains open in higher dimensions $n\ge 5$. Furthermore, in dimension $n=3$, a priori Hessian estimates for $\sigma_{2}(D^2u)=f$ with general right-hand side terms were derived by Qiu \cite{Qiu-sigma2Hessian-24},  Zhou \cite{ZhouXC} and Xu \cite{XuYF}.

In this article, we generalize the results of Qiu \cite{Qiu-sigma2Hessian-24} and Shankar-Yuan \cite{Shankar-Yuan-25}. We establish a priori interior Hessian estimates and regularity for the $\sigma_{2}\,$-Hessian equation with general right-hand side terms 
\begin{equation}\label{sigma2 eq}
    F(D^2u)=\sigma_{2}(\lambda(D^2u))=\sum_{i<j }\lambda_{i}\lambda_{j}=f(x,u,Du)
\end{equation}
 in dimension $n=4$, where $f=f(x,z,p) $ is a positive $C^{1,1}$ function on $B_{1}\times \bb{R}\times \bb{R}^n$. Our main result is as follows:

\begin{theorem}\label{THM: main thm}
    Let $u$ be a smooth solution to \eqref{sigma2 eq} in the positive branch $\Delta u>0$ on $B_{1}\subset\bb{R}^4$. Then we have the implicit Hessian estimate
    \[ |D^2u(0)|\leq C, \]
    where $C$ is a universal constant depending only on $\|f\|_{C^{1,1}}$, $\|\frac{1}{f}\|_{L^{\infty}}$ and $\|u\|_{C^{1}(B_{1})}$.
\end{theorem}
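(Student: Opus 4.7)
The plan is to combine Qiu's variable-RHS maximum-principle argument (used in dimension three) with the four-dimensional structural inequality of Shankar–Yuan (used in the constant-RHS case), and to close the estimate by the doubling reduction suggested in the keywords. That is, assuming the theorem fails, I take a sequence of solutions with $\lambda_{\max}(D^2 u_k(0)) \to \infty$ and uniformly controlled data, and derive a contradiction via rescaling.

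Following Qiu, the natural test function is
\[
W(x) = \eta(x)^{\beta}\, b(|Du|^2)\, \log \lambda_1(D^2 u(x)),
\]
with a cutoff $\eta$ supported in $B_1$, $b$ increasing and convex, and $\beta > 0$ large. Since $\lambda_1$ need not be smooth, I would work at an approximate interior maximum $x_0$, diagonalize $D^2 u$ there with eigenvalues $\lambda_1 \ge \lambda_2 \ge \lambda_3 \ge \lambda_4$, and test against the linearized operator $F^{ij} = \partial \sigma_2 / \partial u_{ij}$, which is uniformly elliptic on the positive branch $\Delta u>0$. Differentiating the equation twice, the concavity of $\log \sigma_2$ on the positive cone yields a coercive $\sum_i F^{ii} u_{11i}^2$ contribution, and the convex factor $b(|Du|^2)$ supplies the dominant $2 b'(|Du|^2) \sum_i F^{ii} u_{ii}^2$. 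The error terms coming from $f_x$, $f_z$ and $f_{p_k} u_{ki}$ (and their first derivatives) are at worst $O(\lambda_1)$ and are absorbed by these coercive squares via Cauchy–Schwarz, exactly as in Qiu's 3D argument.

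The crux is the 4D structural step, needed in the regime $\lambda_1 \gg \lambda_2$ where the Newton–MacLaurin inequalities alone are insufficient. Here one invokes the sharp four-dimensional algebraic argument of Shankar–Yuan, which exploits the identity $2\sigma_2 = (\Delta u)^2 - |D^2 u|^2$ together with the geometry of the positive $\sigma_2$-cone to produce an additional $c\, F^{11} \lambda_1$ term. Replacing the constant $1$ on the right-hand side by $f(x,u,Du)$ introduces only lower-order perturbations controlled by $\|f\|_{C^{0,1}}$ and $\|1/f\|_{L^\infty}$. If the estimate closes at $x_0$, one obtains $\lambda_1 \le C$ directly; otherwise, a doubling-style rescaling $v(y) = \lambda_1(x_0)\, u(x_0 + \lambda_1(x_0)^{-1/2} y)$ preserves the hypotheses up to harmless factors, and iteration produces a contradiction with the $C^1$ bound on $u$.

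The main obstacle is this third step: adapting the specifically four-dimensional combinatorial inequality of Shankar–Yuan so that it tolerates the nontrivial errors from $f_{p_k}$, $f_z$, $f_{x_k}$ and their gradients, and then verifying that these errors remain small in the rescaled problems throughout the doubling iteration. The $C^{1,1}$ regularity of $f$ is used precisely to ensure that second derivatives of $f$ enter only through $|D^2 u|$ and never through $|D^3 u|^2$, which is what makes absorption of the $f$-errors by the coercive third-derivative squares possible and keeps the iteration closed.
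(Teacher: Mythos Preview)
Your proposal has a genuine gap at the closure step. In dimension four the Jacobi-type inequality for $b=\log\Delta u$ (or $\log\lambda_1$) is only an \emph{almost} Jacobi inequality: the coefficient $\varepsilon$ in $\Delta_F b \ge \varepsilon|\nabla_F b|^2 - \text{(lower order)}$ satisfies $\varepsilon = c(n)\bigl(c_n + \lambda_{\min}/\Delta u\bigr)$, which degenerates to zero as $\lambda_{\min}/\Delta u \to -c_n$. Qiu's 3D argument closes precisely because $\varepsilon$ is bounded below there; in 4D it is not, and no ``additional $c\,F^{11}\lambda_1$ term'' from Shankar--Yuan saves this. What the Guan--Qiu test function combined with the almost Jacobi inequality actually yields in 4D is only a \emph{doubling} inequality,
\[
\sup_{B_2}\Delta u \;\le\; C\bigl(n,\|f\|_{C^{1,1}},\|1/f\|_{L^\infty},\|u\|_{C^1}\bigr)\,\sup_{B_1}\Delta u,
\]
not a pointwise Hessian bound.

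Your fallback---``a doubling-style rescaling\ldots{} iteration produces a contradiction with the $C^1$ bound''---does not work as stated. Iterating the doubling inequality propagates a Hessian bound \emph{from small scales to large}, but it cannot manufacture the small-scale bound in the first place, and it certainly does not contradict a $C^1$ bound by itself. The paper supplies this missing small-scale input by an entirely separate mechanism that your proposal omits: a compactness argument passes to a limiting Lipschitz viscosity solution, an Alexandrov-type theorem (proved via Labutin's Wolff-potential estimates, since the usual $k$-convex H\"older estimates do not directly apply when $f$ depends on $Du$) gives almost-everywhere twice differentiability, and then a generalized version of Savin's small perturbation theorem (needed because $F(0,0,0,x)\not\equiv 0$ here) yields a uniform $C^{2,\alpha}$ bound on a small ball. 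Only after this is the doubling inequality iterated finitely many times to reach the origin and obtain the contradiction. Without the Alexandrov + Savin step your argument has no anchor, and the iteration is vacuous.
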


In higher dimensions, we can also prove a Hessian estimate for solutions with an additional dynamic semi-convexity condition.

\begin{theorem}\label{THM: est for dynamic semi-convex solus}
    For $n\ge 5$, let $u$ be a smooth solution to \eqref{sigma2 eq} in the positive branch $\Delta u>0$ on $B_{1}\subset\bb{R}^n$.
    Assume that $u$ satisfies the following dynamic semi-convex condition: 
    \begin{align}\label{eq: dynamic semi-convex condition}
        \lambda_{\text{min}}(D^2u)\ge -c(n)\Delta u\quad with\quad c(n)=\dfrac{\sqrt{3n^2+1}-n+1}{2n},
    \end{align}
    where $\lambda_{min}(D^2u)$ denotes the minimum eigenvalue of $D^2u$.
    
    Then we have the implicit Hessian estimate
     \[ |D^2u(0)|\leq C, \]
    where $C$ is a universal constant depending only on $n$, $\|f\|_{C^{1,1}}$, $\|\frac{1}{f}\|_{L^{\infty}}$ and $\|u\|_{C^{1}(B_{1})}$.
\end{theorem}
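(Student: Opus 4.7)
The plan is to adapt the Jacobi/mean-value/doubling scheme developed in Warren--Yuan, Shankar--Yuan, and Qiu to the general dimensional setting, using the dynamic semi-convexity hypothesis \eqref{eq: dynamic semi-convex condition} exactly to recover the algebraic inequalities that are automatic when $n=3,4$. As usual, one reduces to bounding a quantity like $b=\ln\lambda_{\max}(D^2u)$ (or $b=\ln(1+\lambda_{\max})$, localized by a cutoff) at an interior maximum; the strategy is to establish a differential inequality for $b$ under the linearized operator $L=F^{ij}\partial_{ij}$, and then convert it via a Sobolev/mean-value inequality for $L$ (which behaves like the Laplace--Beltrami operator on the Lagrangian-type graph associated to the linearization of $\sigma_2$) into an $L^p$-to-$L^\infty$ bound, closing the loop by a doubling argument.

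First, I would compute the linearization of $\sigma_2$ along the eigenvector corresponding to $\lambda_{\max}$, and derive a Jacobi-type inequality of the form
\begin{equation*}
L\,b \;\geq\; c_0\,|\nabla_L b|^2 \;-\; C\bigl(\|f\|_{C^{1,1}},\|1/f\|_\infty,\|u\|_{C^1}\bigr)\bigl(1+|\nabla_L b|\bigr),
\end{equation*}
where $c_0>0$. In the constant-right-hand-side case and low dimension this is classical; in our setting the non-constant $f(x,u,Du)$ produces additional terms controlled by its $C^{1,1}$-norm exactly as in Qiu. Here the dynamic semi-convexity enters crucially: the standard derivation produces a ``bad'' third-order term of the shape $\sum_\alpha F^{\alpha\alpha}u_{11\alpha}^2/\lambda_1^2$ weighted against $(\lambda_\alpha-\lambda_1)^2$-factors, and controlling it against the ``good'' gradient term requires bounding ratios $\lambda_\alpha/\lambda_1$ from below. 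The specific constant $c(n)=\frac{\sqrt{3n^2+1}-n+1}{2n}$ is the root of $2nc^2+2(n-1)c-(n+1)=0$, which is precisely the algebraic threshold ensuring positivity of the coefficient of $|\nabla_L b|^2$ after summing over indices.

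Next, I would pass from the pointwise Jacobi inequality to an integral estimate. Since $F^{ij}$ coincides (up to a harmless factor) with the inverse metric of the Lagrangian-type graph $(x,Du(x))$ linearized by $\sigma_2$, one has a Michael--Simon Sobolev inequality for $L$ on super-level sets of $b$; combining it with the Jacobi inequality gives an integral $L^p$-bound for $b$ in terms of its $L^1$-norm. The $L^1$-norm is controlled by integration by parts using the divergence structure of the $\sigma_2$ operator and the $C^1$-norm of $u$, as in Warren--Yuan. A Moser-type iteration then upgrades this to a pointwise bound on $b$, and hence on $\lambda_{\max}(D^2u)(0)$, completing the proof.

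The main obstacle is the Jacobi step: verifying that the specific constant $c(n)$ is indeed the sharp threshold and that no further structural obstruction appears in dimensions $n\geq 5$. All the other ingredients (divergence form, Michael--Simon inequality, doubling/Moser iteration, and the treatment of $C^{1,1}$ right-hand sides) are essentially dimension-free once the Jacobi inequality is in place. The delicate algebraic manipulation that produces the quadratic $2nc^2+2(n-1)c=n+1$ under the hypothesis $\lambda_{\min}\geq -c(n)\Delta u$ is therefore the heart of the argument; everything else consists of adapting the now-standard doubling/mean-value machinery of \cite{Qiu-sigma2Hessian-24, Shankar-Yuan-25} to the present dimension and right-hand side.
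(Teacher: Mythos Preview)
Your algebraic identification of $c(n)$ as the positive root of $2nc^2+2(n-1)c-(n+1)=0$ is correct and matches the paper exactly (after the substitution $y=1-\lambda_{\min}/\Delta u$). But you have misread what this threshold delivers: it is the value at which the coefficient in front of $|\nabla_F b|^2$ \emph{vanishes}, not the value above which it is bounded below by a fixed $c_0>0$. Under the hypothesis $\lambda_{\min}\ge -c(n)\Delta u$, the paper obtains only the \emph{almost} Jacobi inequality
\[
\Delta_F b \;\ge\; \varepsilon(x)\,|\nabla_F b|^2 \;-\; C\Gamma^2(1+\Delta u)\;+\;\sum_i f_{p_i}b_i,
\qquad
\varepsilon(x)=C(n)\Bigl(c(n)+\tfrac{\lambda_{\min}}{\Delta u}\Bigr)\ge 0,
\]
for $b=\log\Delta u$, and $\varepsilon(x)$ may degenerate to $0$ wherever the dynamic semi-convexity is saturated. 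Your Moser/Michael--Simon scheme needs a uniform $c_0>0$ to absorb the gradient term in the iteration; with $\varepsilon$ possibly zero on a large set, the $L^p\to L^\infty$ step breaks down, and there is no divergence-form trick that rescues it.

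The paper's route is therefore structurally different from what you propose. Instead of an integral/mean-value argument, it uses the almost-Jacobi inequality inside Guan--Qiu's maximum-principle test function to prove a \emph{doubling inequality} $\sup_{B_2}\Delta u\le C(\|u\|_{C^1})\sup_{B_1}\Delta u$; the degeneracy of $\varepsilon$ is handled by a case split (when $\lambda_{\min}/\Delta u$ is near $-c(n)$, the term $\beta\sum_i F_{ii}\lambda_i^2$ in the test function compensates). Doubling alone does not give a Hessian bound, so the paper supplements it with: (i) an Alexandrov-type twice-differentiability theorem for Lipschitz viscosity solutions of $\sigma_2=f$ (via Labutin's Wolff-potential H\"older estimate, since Trudinger--Wang gradient bounds are unavailable for general $f(x,u,Du)$); (ii) a generalized Savin small-perturbation theorem that tolerates $F(0,0,0,x)\not\equiv 0$; and (iii) a compactness/contradiction argument that uses (i)--(ii) to get smallness on a tiny ball and then iterates the doubling inequality outward. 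None of these three ingredients appears in your outline, and they are not optional here: the degeneracy of the Jacobi coefficient is precisely the obstruction that forces this indirect compactness route rather than the direct Warren--Yuan integral method you sketched.
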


One application of the above a priori interior Hessian estimates is the interior regularity for viscosity solutions to $\sigma_{2}=f$ in dimension 4. 

\begin{theorem}
    Let $u$ be a continuous viscosity solution to $\sigma_{2}(D^2u)=f$ on $B_1\subset\bb{R}^4$ with $\Delta u>0$ in the viscosity sense. Suppose that $f\in C^{1,1}(B_1)$ and $f>0$. Then $u\in C^{3,\alpha}_{\mathrm{loc}}(B_{1})$ for any $\alpha\in(0,1)$.
\end{theorem}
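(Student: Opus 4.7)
The plan is to combine the a priori Hessian estimate of Theorem \ref{THM: main thm} with a smoothing approximation, and then bootstrap via Evans--Krylov and Schauder.

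First I would fix a ball $B_r\Subset B_1$ and construct smooth approximations. Mollify $f$ to obtain smooth positive $f_\varepsilon$ with $\|f_\varepsilon\|_{C^{1,1}}$ and $\|1/f_\varepsilon\|_{L^\infty}$ uniformly bounded, and mollify $u|_{\partial B_r}$ to obtain smooth $\varphi_\varepsilon\to u|_{\partial B_r}$ uniformly. I would then solve the Dirichlet problem
\[
\sigma_2(D^2 u_\varepsilon)=f_\varepsilon\ \text{ in }B_r,\qquad u_\varepsilon=\varphi_\varepsilon\ \text{ on }\partial B_r,
\]
in the positive branch $\Delta u_\varepsilon>0$. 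Solvability on a ball with continuous boundary data is classical (Caffarelli--Nirenberg--Spruck, Trudinger), since functions of the form $C|x|^2+\ell(x)$ furnish strict subsolutions. The comparison principle for the $\sigma_2$ operator restricted to the positive (Gårding) cone then guarantees $u_\varepsilon\to u$ locally uniformly in $B_r$.

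Next, a standard interior gradient estimate in the positive branch gives $\|u_\varepsilon\|_{C^1(B_{3r/4})}\le C$ uniformly in $\varepsilon$. Applying Theorem \ref{THM: main thm} after rescaling to each $u_\varepsilon$ yields
\[
\|D^2 u_\varepsilon\|_{L^\infty(B_{r/2})}\le C
\]
with $C$ independent of $\varepsilon$, whence $u\in C^{1,1}_{\mathrm{loc}}(B_1)$ by passing to a subsequence (Arzelà--Ascoli in $C^{1,\beta}$ combined with weak-$*$ compactness of $D^2u_\varepsilon$ in $L^\infty$). Once $u\in C^{1,1}$, the conditions $\Delta u>0$ and $\sigma_2(D^2u)=f\ge c>0$ pin $\lambda(D^2u)$ to a compact subset of the positive cone where $F=\sigma_2^{1/2}$ is smooth, uniformly elliptic, and concave. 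The Evans--Krylov theorem then provides $u\in C^{2,\alpha_0}_{\mathrm{loc}}$ for some $\alpha_0\in(0,1)$. Differentiating \eqref{sigma2 eq} in $x_k$ gives a linear elliptic equation $F^{ij}(D^2u)\,\partial_k u_{ij}=\partial_k f$ with coefficients in $C^{0,\alpha_0}$ and right-hand side in $C^{0,1}$; Schauder yields $\partial_k u\in C^{2,\alpha_0}_{\mathrm{loc}}$, i.e., $u\in C^{3,\alpha_0}_{\mathrm{loc}}$. Iterating once more (now with coefficients in $C^{1,\alpha_0}$) upgrades to $u\in C^{3,\alpha}_{\mathrm{loc}}$ for every $\alpha\in(0,1)$.

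The hard part will be setting up the approximation cleanly: both the solvability of the $\sigma_2$ Dirichlet problem in the positive branch with merely continuous boundary data, and the comparison principle for viscosity solutions to $\sigma_2=f$ confined to the positive cone, must be invoked carefully because $\sigma_2$ is only monotone on the Gårding cone rather than on all symmetric matrices. Once these standard ingredients are in place, the rest is a routine combination of Theorem \ref{THM: main thm} and the Evans--Krylov--Schauder bootstrap.
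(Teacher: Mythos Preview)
Your proposal is correct and follows exactly the approach the paper indicates: the paper does not spell out a proof of this theorem but simply remarks that ``the interior regularity follows from the standard approximating process using \cite{CNS3-85} and the Evans--Krylov theory,'' which is precisely the CNS Dirichlet approximation, uniform Hessian bound via Theorem~\ref{THM: main thm}, and Evans--Krylov/Schauder bootstrap that you outline. Your closing caveat about care with solvability and comparison in the G{\aa}rding cone is apt but, as you note, these ingredients are standard for $\sigma_2=f(x)$ on a ball.
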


Let us briefly review the ideas for establishing Hessian estimates for the $\sigma_{2}\,$-Hessian equations so far. In two dimensional case, Heinz \cite{Heinz-59} derived the Hessian estimate for $\sigma_{2}=\det D^2u=1$ via isothermal coordinates. In recent years, Chen-Han-Ou \cite{Chen-Han-Ou} provided a maximum principle proof, and Liu \cite{LiuJK} gave an alternative proof via the partial Legendre transform.

In three dimensions, the equation $\sigma_{2}=1$ corresponds to the special Lagrangian equation with critical phase $\arctan\lambda_{1}+\arctan\lambda_{2}+\arctan\lambda_{3}=\pm\pi/2$. Now the gradient graph $(x,Du(x))$ is a minimal surface (in fact volume minimizing) in $\bb{R}^3\times\bb{R}^3$ \cite{Harvey-Lawson}. By the help of this minimal surface structure, Warren-Yuan \cite{Warren-Yuan-3d-CPAM} derived a Hessian estimate via integral methods. Regarding variable right-hand sides $\sigma_{2}=f$, Qiu \cite{Qiu-sigma2Hessian-24}  observed a similar structure that the gradient graph $(x,Du(x))$ has bounded mean curvature in $(\bb{R}^3\times \bb{R}^3, f^2\mathrm{d}x^2+\mathrm{d}y^2)$ and used this structure to derive a Hessian estimate for $C^{1,1}$ positive $f$. For Lipschitz $f$, Zhou \cite{ZhouXC} further investigated the generalized special Lagrangian equation $\sum \arctan \lambda_{i}/f=\Theta$, where the three dimensional $\sigma_{2}=f$ is just a special case. Moreover, Xu \cite{XuYF} derived interior $C^{2,\alpha}$ estimates only under a small enough H\"older seminorm assumption on $f$.

In four dimensions, Shankar-Yuan's recent breakthrough \cite{Shankar-Yuan-25} introduces a new doubling method for deriving Hessian estimates for $\sigma_{2}=1$. They combined the ideas of Qiu \cite{Qiu-sigma2Hessian-24}, Chaudhuri-Trudinger \cite{Chaudhuri-Trudinger-05} and Savin \cite{Savin-07}. Qiu proved that when the Jacobi inequality
\begin{equation}\label{intro: Jacobi}
    F_{ij}\partial_{ij}\log \Delta u\ge \varepsilon F_{ij}\partial_{i}(\log\Delta u)\partial_{j}(\log\Delta u)
\end{equation}
is valid, a doubling inequality 
\[ \sup_{B_{1}}\Delta u\leq C\left(n, \|u\|_{C^1} \right) \sup_{B_{1/2}}\Delta u \]
can be derived using the maximum principle and Guan-Qiu's test function \cite{Guan-Qiu-Duke-19}. Through rescaling and iteration, this implies that if we can control the Hessian on a small ball, then it can also be controlled at large scales. For three dimensions, the Jacobi inequality \eqref{intro: Jacobi} holds for $\varepsilon=1/3$. In four dimensional case, we can only establish an ``almost Jacobi'' inequality which means that $\varepsilon$ may degenerate to $0$ somewhere. However, this still leads a doubling inequality via a slight modification of Qiu's argument. To obtain Hessian control at small scales, they also proved a partial regularity result. By modifying the proof of the Alexandrov theorem \cite{Evans-Gariepy,Chaudhuri-Trudinger-05}, they proved the almost everywhere twice differentiability of viscosity solutions. Combining this with Savin's small perturbation theorem ($\varepsilon$-regularity theorem) \cite{Savin-07}, the partial regularity follows. Finally, a compactness argument gives us a desired implicit Hessian estimate.

In higher dimensions $n\ge 5$, Hessian estimate under additional assumptions on solutions have been extensively studied. We refer to Guan-Qiu \cite{Guan-Qiu-Duke-19} and Chen-Jian-Zhou \cite{Chen-Jian-Zhou} for the convex case (or under the general assumption $\sigma_{3}(D^2u)\ge-A$ in Guan-Qiu's work), McGonagle-Song-Yuan \cite{McGonagle-Song-Yuan} for the almost convex case, and Shankar-Yuan \cite{Shankar-Yuan-CVPDE-20} for the semi-convex case. Moreover, under the dynamic semi-convex condition \eqref{eq: dynamic semi-convex condition}, the almost Jacobi inequality still holds. Consequently, Hessian estimates can be established using the outlined approach, see \cite{Shankar-Yuan-25}.

For more introductions to recent advances in $\sigma_{2}\,$-Hessian equations, we refer to the survey \cite{Yuan-survey} wrote by Yuan.

In this article, we employ Shankar-Yuan's doubling method to handle the case of $C^{1,1}$ variable right-hand side terms in dimension 4. Here, we face three difficulties. First, when differentiating the equation, the right-hand side terms are no longer zero. This introduces extra negative terms when proving the almost Jacobi inequality. Through careful analysis, we still obtain the almost Jacobi inequality with controllable remainder terms, see Proposition \ref{PROP: Almost Jacobi ineq}. This still leads the doubling inequality. Second, to prove the Alexandrov type theorem, we need Hölder or gradient estimates for viscosity solutions. Here, we use Labutin's potential estimate \cite{Labutin-Duke-02} to achieve this. Third, Savin's small perturbation theorem cannot be directly applied to $F(D^2u,Du,u,x)=\sigma_{2}(D^2u)-f(x,u,Du)=0$ since Savin requires $F(0,0,0,x)\equiv 0$. Thus, a generalized version of Savin's theorem is needed. This generalization is proved by Lian-Zhang \cite{lian-zhang} and my recent work \cite{Fan}.

Once establishing a priori interior Hessian estimates, the interior regularity follows from the standard approximating process using \cite{CNS3-85} and the Evans-Krylov theory.

This article is organized as follows. In Sections \ref{Section: Almost Jacobi Inequality} and \ref{Section: Doubling ineq}, we establish the almost Jacobi inequality and the doubling inequality. In Section \ref{Section: Alexandrov Regularity}, we establish the Alexandrov type regularity theorem for viscosity solutions. In Sections \ref{Section: Genaralized Savin's thm} and \ref{Section: Proof of Main Theorems}, we stated our generalized small perturbation theorem and provide the proof of Theorem \ref{THM: main thm} and \ref{THM: est for dynamic semi-convex solus}.

\bigskip
\noindent
{\bf Acknowledgments.}  The author is supported by National Key R\&D Program of China 2020YFA0712800. He is grateful to Professors Yu Yuan and Ravi Shankar for helpful discussions and encouragements, and to Professors Qing Han and Yuguang Shi for their supports.

\section{Almost Jacobi Inequality}\label{Section: Almost Jacobi Inequality}
A solution $u$ to the sigma-2 equation \eqref{sigma2 eq} is called admissible, or 2-convex, if
\[\lambda(D^2u)\in \Gamma_{2}:=\{\lambda\in\bb{R}^n: \sigma_{1}(\lambda)>0,\sigma_{2}(\lambda)>0\}.\]
We denote $\Delta_{F}=F_{ij}\partial_{ij}$ to be the linearized operator for the $\sigma_{2}$ equation \eqref{sigma2 eq} at $D^2u$, where  
\begin{equation}
    (F_{ij})=\left(\dfrac{\partial F}{\partial u_{ij}}\right)=\Delta u I-D^2u.
\end{equation}
By \cite{CNS3-85, WnagXJ-note-k-Hessian}, \eqref{sigma2 eq} is elliptic, or equivalently  $(F_{ij})$ is positive definite, provided $u$ is admissible.

The gradient square $|\nabla_{F} v|^2$ for any smooth function $v$ with respect to the matrix $(F_{ij})$ is defined by
\[|\nabla_{F}v|^2=\sum_{i,j}F_{ij}\partial_{i}v\partial_{j}v.\]

\subsection{Preliminaries}
We first introduce some algebraic preliminary lemmas related to the $\sigma_{2}$ operator. It can be found in \cite{Li-Trudinger-94,Qiu-sigma2Hessian-24,Shankar-Yuan-25}. For completeness, we present the proof.

The first Lemma gives us a sharp control on the minimum eigenvalue.
\begin{lemma}\label{LEMMA:Sharp estimate for minimal eigenvalue}
    Let $\lambda=(\lambda_{1},\cdots,\lambda_{n})\in\Gamma_{2}=\{\sigma_{1}(\lambda)>0, \sigma_{2}(\lambda)>0\}$ with $\lambda_{1}\ge\lambda_{2}\ge\cdots\ge \lambda_{n}$. Then for $n>2$ the following sharp bound holds:
    \[ \sigma_{1}>\dfrac{n}{n-2}|\lambda_{n}|. \]
\end{lemma}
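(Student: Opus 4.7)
The plan is to reduce the $n$-variable inequality to the classical Newton--Maclaurin inequality applied to the $(n-1)$ remaining eigenvalues after stripping off $\lambda_n$. The key observation is that both $\sigma_1$ and $\sigma_2$ split cleanly with respect to the deletion of a single variable:
\begin{align*}
    \sigma_1(\lambda)&=\sigma_1(\lambda|n)+\lambda_n,\\
    \sigma_2(\lambda)&=\sigma_2(\lambda|n)+\lambda_n\,\sigma_1(\lambda|n),
\end{align*}
where $\lambda|n=(\lambda_1,\dots,\lambda_{n-1})$. The hypothesis $\lambda\in\Gamma_2$ will constrain $\sigma_2(\lambda|n)$ from below in terms of $\sigma_1(\lambda|n)$ and $|\lambda_n|$, while Newton's inequality constrains it from above, and comparing the two will give exactly the factor $\tfrac{n}{n-2}$.

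First I would dispose of the trivial sign case: if $\lambda_n\ge 0$, then all eigenvalues are nonnegative, so $\sigma_1\ge n\lambda_n\ge \tfrac{n}{n-2}\lambda_n=\tfrac{n}{n-2}|\lambda_n|$ for $n>2$, and the inequality follows (the strictness being meaningful only in the genuine case below). Now assume $\lambda_n<0$. From $\sigma_1>0$ we immediately obtain $\sigma_1(\lambda|n)>|\lambda_n|>0$, and from $\sigma_2>0$ together with the splitting above we obtain
\[
    \sigma_2(\lambda|n)>-\lambda_n\,\sigma_1(\lambda|n)=|\lambda_n|\,\sigma_1(\lambda|n).
\]

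Next I would invoke Newton's inequality for the $n-1$ real numbers in $\lambda|n$, namely
\[
    \sigma_2(\lambda|n)\le\dfrac{n-2}{2(n-1)}\,\sigma_1(\lambda|n)^2.
\]
Chaining this with the previous lower bound for $\sigma_2(\lambda|n)$ and dividing by the positive quantity $\sigma_1(\lambda|n)$ yields
\[
    \sigma_1(\lambda|n)>\dfrac{2(n-1)}{n-2}\,|\lambda_n|.
\]
Substituting back into $\sigma_1=\sigma_1(\lambda|n)+\lambda_n=\sigma_1(\lambda|n)-|\lambda_n|$ gives
\[
    \sigma_1>\left(\dfrac{2(n-1)}{n-2}-1\right)|\lambda_n|=\dfrac{n}{n-2}|\lambda_n|,
\]
which is the claim.

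The main obstacle is not really computational but conceptual: one has to spot that the right move is to delete the smallest eigenvalue and apply Newton's inequality on the reduced tuple, and then to recognize that the two opposing bounds on $\sigma_2(\lambda|n)$ pinch out exactly the sharp constant. Sharpness can be verified by testing $\lambda_1=\dots=\lambda_{n-1}=t$, $\lambda_n=-\tfrac{n-2}{2}t$, which sits on $\{\sigma_2=0\}\subset\partial\Gamma_2$ and saturates Newton's inequality, confirming that $\tfrac{n}{n-2}$ is optimal.
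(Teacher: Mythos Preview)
Your proof is correct and follows essentially the same route as the paper: both delete $\lambda_n$, use the splitting $\sigma_2(\lambda)=\sigma_2(\lambda|n)+\lambda_n\sigma_1(\lambda|n)$, and then apply Newton's inequality $\sigma_1(\lambda|n)^2\ge \tfrac{2(n-1)}{n-2}\sigma_2(\lambda|n)$ on the reduced tuple to extract the constant $\tfrac{n}{n-2}$. The only cosmetic difference is that the paper re-derives Newton's inequality from $2\lambda_i\lambda_j\le \lambda_i^2+\lambda_j^2$ rather than quoting it, and packages the final algebra as a bound on the ratio $\sigma_1/|\lambda_n|$ instead of first bounding $\sigma_1(\lambda|n)$.
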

\begin{proof}
    If $\lambda_{n}\ge 0$, we have $\sigma_{1}\ge n\lambda_{n}$, the conclusion holds.

    If $\lambda_{n}<0$, set $\lambda'=(\lambda_{1},\cdots,\lambda_{n-1})$. Note that $\sigma_{2}(\lambda)=\lambda_{n}\sigma_{1}(\lambda')+\sigma_{2}(\lambda')$, then
    \[ \lambda_{n}=\dfrac{\sigma_{2}(\lambda)-\sigma_{2}(\lambda')}{\sigma_{1}(\lambda')}<0. \]
    In particular, this implies $0<\sigma_{2}(\lambda)<\sigma_{2}(\lambda')$. Now
    \begin{align}\label{eq: est on sigma 1/lambda n}
        \dfrac{\sigma_{1}(\lambda)}{|\lambda_{n}|}=\dfrac{\sigma_{1}(\lambda')}{-\lambda_{n}}-1=\dfrac{[\sigma_{1}(\lambda')]^2}{\sigma_{2}(\lambda')-\sigma_{2}(\lambda)}-1>\dfrac{[\sigma_{1}(\lambda')]^2}{\sigma_{2}(\lambda')}-1.
    \end{align}
    Since $[\sigma_{1}(\lambda')]^2=|\lambda'|^2+2\sigma_{2}(\lambda')$ and 
    \[ 2\sigma_{2}(\lambda')=\sum_{1\leq i<j\leq n-1}2\lambda_{i}\lambda_{j}\leq \sum_{1\leq i<j\leq n-1}(\lambda_{i}^2+\lambda_{j}^2)=(n-2)|\lambda'|^2.  \]
    We conclude that 
    \begin{equation}\label{eq: est on (sigma1)^2}
        [\sigma_{1}(\lambda')]^2\ge \left[\dfrac{2}{n-2}+2\right]\sigma_{2}(\lambda')=\dfrac{2n-2}{n-2}\sigma_{2}(\lambda').
    \end{equation}
    Finally, \eqref{eq: est on sigma 1/lambda n} and \eqref{eq: est on (sigma1)^2} yield
    \[ \dfrac{\sigma_{1}(\lambda)}{|\lambda_{n}|}>\dfrac{2n-2}{n-2}-1=\dfrac{n}{n-2}. \]
\end{proof}

\begin{lemma}\label{LEMMA: est for Fii}
   Let $\lambda=(\lambda_{1},\cdots,\lambda_{n})\in\Gamma_{2}=\{\sigma_{1}(\lambda)>0, \sigma_{2}(\lambda)>0\}$ with $\lambda_{1}\ge\lambda_{2}\ge\cdots\ge \lambda_{n}$, then we have
   \begin{equation}
       \dfrac{\sigma_{2}}{\sigma_{1}}\leq (\sigma_{1}-\lambda_{1})\leq  \dfrac{n-1}{n}\sigma_{1} ,
   \end{equation}
   and for any $i\ge 2$,
   \begin{equation}
       \left(1-\dfrac{1}{\sqrt{2}}\right)\sigma_{1}\leq \sigma_{1}-\lambda_{i}\leq \dfrac{2n-2}{n}\sigma_{1} .
   \end{equation}
    
\end{lemma}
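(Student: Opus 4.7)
The four estimates decouple, and the single algebraic tool behind all of them is the Newton identity $|\lambda|^{2}=\sigma_{1}^{2}-2\sigma_{2}$, to be applied either to $\lambda$ itself or to a deleted vector $\lambda'=(\lambda_{1},\dots,\widehat{\lambda_{i}},\dots,\lambda_{n})$. I would dispatch the two upper bounds first by elementary ordering arguments. For $\sigma_{1}-\lambda_{1}\le\tfrac{n-1}{n}\sigma_{1}$, since $\lambda_{1}$ is the largest eigenvalue one has $n\lambda_{1}\ge\sigma_{1}$, and the bound is immediate. For $\sigma_{1}-\lambda_{i}\le\tfrac{2n-2}{n}\sigma_{1}$ with $i\ge 2$, I would estimate $\sigma_{1}-\lambda_{i}\le\sigma_{1}-\lambda_{n}\le\sigma_{1}+|\lambda_{n}|$ and invoke Lemma \ref{LEMMA:Sharp estimate for minimal eigenvalue} to absorb $|\lambda_{n}|<\tfrac{n-2}{n}\sigma_{1}$.

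The lower bound $\sigma_{2}/\sigma_{1}\le\sigma_{1}-\lambda_{1}$ is the step I expect to be the main obstacle, since $\sigma_{2}(\lambda')$ (for $\lambda'=(\lambda_{2},\dots,\lambda_{n})$) carries no definite sign even though $\sigma_{2}(\lambda)>0$. Writing $\sigma_{1}-\lambda_{1}=\sigma_{1}(\lambda')$ and $\sigma_{2}(\lambda)=\lambda_{1}\sigma_{1}(\lambda')+\sigma_{2}(\lambda')$, the quantity to bound from below collapses to
\[ \sigma_{1}(\sigma_{1}-\lambda_{1})-\sigma_{2}(\lambda)=[\sigma_{1}(\lambda')]^{2}-\sigma_{2}(\lambda'). \]
Applying Newton's identity on $\lambda'$, namely $[\sigma_{1}(\lambda')]^{2}=\sum_{i\ge 2}\lambda_{i}^{2}+2\sigma_{2}(\lambda')$, converts the indefinite $\sigma_{2}(\lambda')$ into a manifestly nonnegative sum
\[ \sigma_{1}(\sigma_{1}-\lambda_{1})-\sigma_{2}(\lambda)=\tfrac{1}{2}[\sigma_{1}(\lambda')]^{2}+\tfrac{1}{2}\sum_{i\ge 2}\lambda_{i}^{2}\ge 0, \]
which yields the claim. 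I expect the symmetric $\tfrac{1}{2}$-split to be the only not-quite-obvious ingredient in the whole lemma.

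Finally, for the lower bound $\sigma_{1}-\lambda_{i}\ge(1-1/\sqrt{2})\sigma_{1}$ with $i\ge 2$, it suffices to prove $\lambda_{2}\le\sigma_{1}/\sqrt{2}$. If $\lambda_{2}\le 0$ this is trivial; otherwise $\lambda_{1}\ge\lambda_{2}>0$ gives $|\lambda|^{2}\ge\lambda_{1}^{2}+\lambda_{2}^{2}\ge 2\lambda_{2}^{2}$, and Newton's identity combined with $\sigma_{2}>0$ yields $2\lambda_{2}^{2}\le|\lambda|^{2}=\sigma_{1}^{2}-2\sigma_{2}\le\sigma_{1}^{2}$, i.e.\ $\sqrt{2}\lambda_{2}\le\sigma_{1}$, as desired.
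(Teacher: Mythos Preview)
Your argument is correct and, for three of the four inequalities (both upper bounds and the lower bound for $i\ge 2$), essentially identical to the paper's. The one place you diverge is the lower bound $\sigma_{2}/\sigma_{1}\le\sigma_{1}-\lambda_{1}$: the paper instead factors
\[
\sigma_{1}-\lambda_{1}=\dfrac{\sigma_{1}^{2}-\lambda_{1}^{2}}{\sigma_{1}+\lambda_{1}}\ge\dfrac{2\sigma_{2}}{\sigma_{1}+\lambda_{1}}\ge\dfrac{\sigma_{2}}{\sigma_{1}},
\]
using Newton's identity on the \emph{full} vector $\lambda$ (so that $\sigma_{1}^{2}-\lambda_{1}^{2}=2\sigma_{2}+\sum_{i\ge 2}\lambda_{i}^{2}$) together with $\lambda_{1}\le\sigma_{1}$. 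Your route via the deleted vector $\lambda'$ and the $\tfrac{1}{2}$-split is an equally short alternative; it has the minor advantage of never needing the auxiliary fact $\lambda_{1}\le\sigma_{1}$, while the paper's factoring trick avoids introducing $\sigma_{2}(\lambda')$ at all. Both are two-line arguments and neither is materially simpler than the other.
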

\begin{proof}
    Notice that $\sigma_{1}^2=2\sigma_{2}+|\lambda|^2$, then we have
    \begin{equation*}
        \sigma_{1}-\lambda_{1}=\dfrac{\sigma_{1}^2-\lambda_{1}^2}{\sigma_{1}+\lambda_{1}}\ge \dfrac{2\sigma_{2}}{\sigma_{1}+\lambda_{1}}\ge \dfrac{\sigma_{2}}{\sigma_{1}}.
    \end{equation*}
    For the upper bound, 
    \[ \sigma_{1}-\lambda_{1}\leq \sigma_{1}-\dfrac{1}{n}\sigma_{1}\leq \dfrac{n-1}{n}\sigma_{1}. \]
    
    For $i\ge 2$, if $\lambda_{i}\leq0$, the lower bound for $\sigma_{1}-\lambda_{i}$ is obvious. If $\lambda_{i}>0$, we have
    \begin{equation*}
        \sigma_{1}-\lambda_{i}\ge \sigma_{1}-\sqrt{\dfrac{\lambda_{1}^2+\cdots+\lambda_{i}^2}{i}}\ge \sigma_{1}-\dfrac{|\lambda|}{\sqrt{2}}\ge \left(1-\dfrac{1}{\sqrt{2}}\right)\sigma_{1}.
    \end{equation*}
    The upper bound of $\sigma_{1}-\lambda_{i}$ follows from Lemma \ref{LEMMA:Sharp estimate for minimal eigenvalue},
    \[ \sigma_{1}-\lambda_{i}\leq \sigma_{1}+|\lambda_{n}|\leq \dfrac{2n-2}{n}\sigma_{1}. \]
\end{proof}

\subsection{Almost Jacobi Inequality}
In this subsection, we establish the almost Jacobi inequality for the quantity $b=\log\Delta u$. We begin by proving preliminary estimates on the derivatives of $f$.

\begin{lemma}
        Let $u$ be an admissible solution to $\sigma_{2}(D^2u)=f(x,u,Du)$, then we have
        \begin{align}\label{eq: est on fi}
            |f_{i}|\leq C\Gamma+C\Delta u,
        \end{align}
        and
        \begin{equation}\label{eq: est on fij}
            -C\Gamma^2(1+\Delta u)^2+\sum_{a=1}^{n}f_{p_{a}}u_{aij}\leq f_{ij}\leq C\Gamma^2(1+\Delta u)^2+\sum_{a=1}^{n}f_{p_{a}}u_{aij},
        \end{equation}
        where $\Gamma=\|u\|_{C^{1}}+1$ and $C$ is a universal constant depending on $n$, $\|f\|_{C^{1,1}}$ ,$\|\frac{1}{f}\|_{L^{\infty}(B_{1})}$.
\end{lemma}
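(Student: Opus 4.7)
The plan is a direct chain-rule computation, combined with the pointwise bound on individual Hessian entries that admissibility already provides. Since $\lambda(D^2u)\in\Gamma_2$, Lemma \ref{LEMMA:Sharp estimate for minimal eigenvalue} gives $|\lambda_n|\le\tfrac{n-2}{n}\Delta u$ and hence $\lambda_1\le\Delta u+|\lambda_n|\le C\Delta u$, so the operator norm of $D^2u$ is controlled by a constant multiple of $\Delta u$. In particular $|u_{ai}|\le C\Delta u$ for every index pair, and the only piece of information about $u$ not already covered by admissibility and $\|u\|_{C^1}\le\Gamma-1$ is the third derivative, which will appear only in the explicit remainder.

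First I would compute
\[
f_i=f_{x_i}+f_z\,u_i+\sum_a f_{p_a}\,u_{ai}.
\]
The first two terms are bounded by $\|f\|_{C^1}(1+\|u\|_{C^1})\le C\Gamma$, and the last sum by $C\|f\|_{C^1}\Delta u$. This yields \eqref{eq: est on fi}.

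Next, differentiating once more and collecting terms,
\begin{align*}
f_{ij}={}&f_{x_ix_j}+f_{x_iz}u_j+f_{x_jz}u_i+f_{zz}u_iu_j+f_z u_{ij}\\
&+\sum_a f_{x_ip_a}u_{aj}+\sum_a f_{x_jp_a}u_{ai}+\sum_a f_{zp_a}u_i u_{aj}+\sum_a f_{zp_a}u_j u_{ai}\\
&+\sum_{a,b} f_{p_ap_b}u_{ai}u_{bj}+\sum_a f_{p_a}u_{aij}.
\end{align*}
Every term on the right other than the last depends only on $x$, $u$, $Du$, and $D^2u$; each factor lies in $\{1,\Gamma,C\Delta u\}$ and each coefficient is bounded by $\|f\|_{C^{1,1}}$. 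Hence every such term is dominated in absolute value by $C\Gamma^2(1+\Delta u)^2$. The single remaining piece $\sum_a f_{p_a}u_{aij}$ contains the uncontrolled third derivatives, and is retained unchanged as the remainder in \eqref{eq: est on fij}. Combining the two directions of the inequality gives the stated two-sided bound.

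No substantive obstacle arises. The only point that needs to be made carefully is the admissibility-based estimate $|u_{ai}|\le C\Delta u$; once this is in place, the proof reduces to bookkeeping in the chain rule, and the separation of the third-derivative term $\sum_a f_{p_a}u_{aij}$ is precisely what is required so that the resulting bound on $f_{ij}$ can later be fed into the linearized operator $\Delta_F$ together with the differentiated equation.
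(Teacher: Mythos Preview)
Your approach is essentially the same as the paper's: chain-rule expansion plus the Hessian-entry bound $|u_{ai}|\le C\Delta u$. One minor point: your inequality $\lambda_1\le\Delta u+|\lambda_n|$ is not justified as written (the other eigenvalues could contribute up to $(n-1)|\lambda_n|$); the paper instead observes directly that $(\Delta u)^2=|D^2u|^2+2f>|D^2u|^2$, giving the sharper bound $|u_{ij}|\le\Delta u$ without invoking Lemma~\ref{LEMMA:Sharp estimate for minimal eigenvalue}.
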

\begin{proof}
    Differentiating $f=f(x,u(x),Du(x))$ directly, we get
    \begin{equation*}
        f_{i}=f_{x_{i}}+f_{z}u_{i}+\sum_{a}f_{p_{a}}u_{ai}
    \end{equation*}
    and
    \begin{equation*}
        \begin{split}
            f_{ij}&=f_{x_{i}x_{j}}+f_{x_{i}z}u_{j}+\sum_{a}f_{x_{i}p_{a}}u_{aj}+f_{x_{j}z}u_{i}+f_{zz}u_{i}u_{j}+\sum_{a}f_{zp_{a}}u_{i}u_{aj}+f_{z}u_{ij}\\
            &\quad +\sum_{a}f_{x_{j}p_{a}}u_{ai}+ \sum_{a}f_{zp_{a}}u_{j}u_{ai}+\sum_{a,b}f_{p_{a}p_{j}}u_{ai}u_{bj}+\sum_{a}f_{p_{a}}u_{aij}.
        \end{split}
    \end{equation*}
    Observe that $(\Delta u)^2=|D^2u|^2+2f> |D^2u|^2$, which implies $|u_{ij}|\leq \Delta u$ for any $i,j=1,2,\cdots n$. Therefore, the estimates \eqref{eq: est on fi} and \eqref{eq: est on fij} follow.
\end{proof}

\begin{proposition}\label{PROP: Almost Jacobi ineq}
    Let $u$ be a smooth admissible solution to \eqref{sigma2 eq} on $B_{1}\subset\bb{R}^n$ and denote $b=\log\Delta u$. Then, in dimension $n= 4$, we have the almost Jacobi inequality:
    \begin{equation}\label{Jacobi ineq}
        \Delta_{F}b\ge \varepsilon |\nabla_{F}b|^2-C\Gamma^2(1+\Delta u)+\sum_{i=1}^{n}f_{p_{i}}b_{i},
    \end{equation}
    for 
    \[ \varepsilon=\dfrac{2}{9}\left(\dfrac{1}{2}+\dfrac{\lambda_{\text{min}}}{\Delta u}\right)>0. \]
    Here $\Gamma=\|u\|_{C^{1}(B_{1})}+1$ and $C$ is a universal constant depending on $n$, $\|f\|_{C^{1,1}(B_{1})}, \|\frac{1}{f}\|_{L^{\infty}(B_{1})}$.
    
    In dimensions $n\ge 5$, the almost Jacobi inequality holds for 
    \[\varepsilon=\dfrac{\sqrt{3n^2+1}-n-1}{3(n-1)}\left(\dfrac{\sqrt{3n^2+1}-n+1}{2n}+\dfrac{\lambda_{\min}}{\Delta u}\right),\]
    provided $u$ satisfies the dynamic semi-convex condition \eqref{eq: dynamic semi-convex condition}:
    \begin{align*}
        \lambda_{\text{min}}(D^2u)\ge -c(n)\Delta u\quad with\quad c(n)=\dfrac{\sqrt{3n^2+1}-n+1}{2n}.
    \end{align*}
    
\end{proposition}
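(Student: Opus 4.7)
The plan is to work at a fixed point $x_0$ in an orthonormal frame that diagonalizes $D^2u(x_0)$, so $(F_{ij})$ is diagonal with entries $F_{ii} = \sigma_1 - \lambda_i$. Differentiating \eqref{sigma2 eq} once in $x_k$ gives the linear constraint
\[
F_{ij} u_{ijk} = f_k, \qquad \text{i.e.,} \qquad \sum_i(\sigma_1 - \lambda_i)\, u_{iik} = f_k.
\]
Differentiating again in $x_m$, summing $m = 1,\ldots,n$, and using the explicit second derivative $F_{ij,pq} = \delta_{ij}\delta_{pq} - \delta_{ip}\delta_{jq}$ of $\sigma_2$ gives
\[
F_{ij}(\Delta u)_{ij} = \Delta f + \sum_m\Bigl[\sum_{i,j}(u_{ijm})^2 - (\Delta u_m)^2\Bigr].
\]
Combined with the chain-rule identity $\Delta_F b = F_{ij}(\Delta u)_{ij}/\Delta u - |\nabla_F b|^2$, this is the starting point.

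Next I apply \eqref{eq: est on fij} on the diagonal to get $\Delta f \ge \sum_a f_{p_a}(\Delta u)_a - C\Gamma^2(1+\Delta u)^2$. Since the claim is trivial when $\Delta u \le 1$, we may assume $\Delta u \ge 1$, and dividing by $\Delta u$ produces exactly the drift $\sum_a f_{p_a} b_a$ on the right-hand side of \eqref{Jacobi ineq} together with an admissible error $C\Gamma^2(1+\Delta u)$. The proposition is thereby reduced to the purely algebraic inequality
\[
\sum_m\Bigl[\sum_{i,j}(u_{ijm})^2 - (\Delta u_m)^2\Bigr] \ge (1+\varepsilon)\sum_m(\sigma_1 - \lambda_m)\frac{(\Delta u_m)^2}{\Delta u},
\]
subject to the constraint $\sum_i(\sigma_1-\lambda_i) u_{iim} = f_m$ for each $m$.

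The algebraic inequality is where the dimension enters. For each fixed $m$, the left-hand side equals $2\sum_{i<j}\bigl[(u_{ijm})^2 - u_{iim} u_{jjm}\bigr]$; using the constraint to eliminate $u_{mmm}$, one is left with a quadratic form in the remaining $u_{iim}$'s and the off-diagonal $u_{ijm}$'s, which is bounded from below by a case split on the signs and relative sizes of the $\lambda_i$'s in the spirit of Chaudhuri--Trudinger \cite{Chaudhuri-Trudinger-05} and Shankar--Yuan \cite{Shankar-Yuan-25}. This sub-case analysis delivers the explicit ratio $\varepsilon = \tfrac{2}{9}(\tfrac12 + \lambda_{\min}/\Delta u)$ in dimension four, which remains strictly positive thanks to Lemma \ref{LEMMA:Sharp estimate for minimal eigenvalue} (which gives $\lambda_{\min} > -\Delta u/2$ when $n = 4$). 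In dimensions $n \ge 5$ the dynamic semi-convexity \eqref{eq: dynamic semi-convex condition} plays the same role, yielding the stated higher-dimensional $\varepsilon$.

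The main obstacle, relative to the constant right-hand side treatment of \cite{Shankar-Yuan-25}, is the inhomogeneous constraint: solving it for $u_{mmm}$ injects a term linear in $f_m$, and Cauchy--Schwarz then produces a residual of order $f_m^2/\Delta u$. Using $|f_m| \le C(\Gamma + \Delta u)$ from \eqref{eq: est on fi}, this residual is at most $C\Gamma^2(1+\Delta u)$ and can be absorbed into the error. The delicate technical point is verifying that this asymmetry does not spoil the tight case split which produces the stated, nearly sharp value of $\varepsilon$; every extra negative contribution must be traced through either to $\sum_a f_{p_a} b_a$ (via the $f_{p_a} u_{aij}$ picked out in \eqref{eq: est on fij}) or into the $C\Gamma^2(1+\Delta u)$ error.
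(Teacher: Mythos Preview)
Your outline matches the paper's structure---differentiate twice, isolate for each direction $i$ a quadratic form $Q_i$ in $t=(u_{11i},\ldots,u_{nni})$ subject to the linear constraint $\langle DF,t\rangle=f_i$, and absorb the $f$-dependent remainders. But the entire content lies in the algebraic inequality, which you only assert, and the one concrete step you propose is problematic. Eliminating $u_{mmm}$ from the constraint means dividing by $F_{mm}=\sigma_1-\lambda_m$, which for $m=1$ can be as small as $f/\Delta u$ (Lemma~\ref{LEMMA: est for Fii}). The Cauchy--Schwarz residual is then of order $f_1^2/F_{11}^2\sim(\Delta u)^4$, not $f_m^2/\Delta u$ as you claim, and this destroys the $C\Gamma^2(1+\Delta u)$ error budget. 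The paper instead \emph{projects} $t$ orthogonally off $DF=(F_{11},\ldots,F_{nn})$, so the denominator is $|DF|^2\ge(n-2)(\Delta u)^2$ and the error $f_i^2/|DF|^2\le C\Gamma^2$ is harmless.

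More importantly, the stated $\varepsilon$ is not produced by a case split on signs (and Chaudhuri--Trudinger \cite{Chaudhuri-Trudinger-05} is the Alexandrov-regularity paper, not relevant here). After projecting, $Q_i$ reduces to a $2\times2$ form on $\mathrm{span}\{E,L\}$ whose trace and determinant must be shown nonnegative---this is how \cite{Shankar-Yuan-25} obtains the sharp $\varepsilon$ when $f\equiv1$. The new difficulty here is that the Cauchy--Schwarz step separating the $f_i$-error introduces a parameter $\theta>0$ that perturbs this $2\times2$ matrix by $-9\theta$ in the determinant. The paper's key observation is that a strictly positive leftover $(n-1)f/|DF|^2$ survives in the determinant (from $|L|^2<1$), so one may choose $\theta\sim f/|DF|^2$ adaptively to keep the determinant nonnegative; the price is an error $\sim 1/\theta\sim(\Delta u)^2$ in $Q_i$, which after dividing by $\Delta u$ lands exactly in $C\Gamma^2(1+\Delta u)$. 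This $\theta$-balancing is the genuine new content over the constant-$f$ case and is absent from your sketch.
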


\begin{proof} \emph{Step 1. Expression of Jacobi inequality.} For any fixed point $p$, by choosing an appropriate coordinate such that $D^2u$ is diagonalized at $p$ and $D^2u(p)=\mathrm{diag}\{\lambda_{1},\cdots,\lambda_{n}\}$ with $\lambda_{1}\ge\cdots\ge\lambda_{n}$. Hereafter, all computations are performed at the point $p$. Throughout this proof, $C$ denotes the universal constant depending on $n,\|f\|_{C^{1,1}}$, and $\|\frac{1}{f}\|_{L^{\infty}}$ which may changes line by line. 

Now the coefficients of the linearized operator $\Delta_{F}$ become
\[(F_{ij})=\left(\dfrac{\partial F}{\partial u_{ij}}\right)=\mathrm{diag}\{F_{11},\cdots,F_{nn}\}=\mathrm{diag}\{\Delta u-\lambda_{1},\cdots,\Delta u-\lambda_{n}\}. \]
Computing the derivatives of $b=\log\Delta u$, we obtain
\begin{align}\label{eq: Derivatives of b}
    |\nabla_{F}b|=\sum_{i}\dfrac{F_{ii}(\Delta u_{i})^2}{(\Delta u)^2},\quad \text{and}\quad \Delta_{F}b=\sum_{i,k}\dfrac{F_{ii}u_{iikk}}{\Delta u}-\sum_{i}\dfrac{F_{ii}(\Delta u_{i})^2}{(\Delta u)^2}.
\end{align}
Next, we use the linearized equation to replace the fourth-order term $F_{ii}u_{iikk}$. Differentiating the equation $\sigma_{2}=\frac{1}{2}\left[(\Delta u)^2-|D^2u|^2\right]=f$ with respect to $x_{k}$, we obtain at the point $p$,
\begin{equation}\label{eq: Diff once}
    \Delta_{F}u_{k}=F_{ii}u_{iik}=f_{k}.
\end{equation}
Differentiating \eqref{eq: Diff once} with respect to $x_{k}$ again, we obtain at the point $p$,
\begin{equation}\label{eq: Diff twice}
    \Delta_{F}u_{kk}=\sum_{i}F_{ii}u_{iikk}=f_{kk}+\sum_{i,j}u_{ijk}^2-(\Delta u_{k})^2.
\end{equation}
Put it into \eqref{eq: Derivatives of b}, we obtain
\begin{align*}
    \Delta_{F}b&=\dfrac{1}{\Delta u}\left\{\Delta f +\sum_{i,j,k}u_{ijk}^2-\sum_{i}\left(1+\dfrac{F_{ii}}{\Delta u}\right)(\Delta u_{i})^2 \right\}\\
    &=\dfrac{1}{\Delta u}\left\{\Delta f +6\sum_{i<j<k}u_{ijk}^2+3\sum_{j\neq i}u_{jji}^2+u_{iii}^2-\sum_{i}\left(1+\dfrac{F_{ii}}{\Delta u}\right)(\Delta u_{i})^2 \right\}\\
    &\ge \dfrac{\Delta f}{\Delta u}+\dfrac{1}{\Delta u}\sum_{i}\left\{3\sum_{j\neq i}u_{jji}^2+u_{iii}^2- \left(1+\dfrac{F_{ii}}{\Delta u}\right)(\Delta u_{i})^2 \right\}
\end{align*}
Combining with the gradient term, we obtain
\begin{equation}\label{eq: lower bound for Jacobi}
    \begin{split}
        \Delta_{F}b-\varepsilon|\nabla_{F}b|^2& \ge \dfrac{\Delta f}{\Delta u}+\dfrac{1}{\Delta u}\sum_{i}\left\{3\sum_{j\neq i}u_{jji}^2+u_{iii}^2- \left(1+\delta \dfrac{F_{ii}}{\Delta u}\right)(\Delta u_{i})^2 \right\} \\
        &:= \dfrac{\Delta f}{\Delta u}+\dfrac{1}{\Delta u}\sum_{i}Q_{i}
    \end{split}
\end{equation}
where $\delta:=1+\varepsilon$. In the following proof, all our efforts will be devoted to estimating the lower bound of $Q_{i}$.

For any fixed $i\in\{1,\cdots,n\}$. Denote $t=t_{i}:=(u_{11i},\cdots,u_{nni})$, $e_{i}$ be the standard $i$-th basis of $\bb{R}^n$ and $a=(1,\cdots,1)$. We view $Q_{i}$ as a quadratic form at $t$:
\[ Q_{i}=3|t|^2-2\innerproduct{t}{e_{i}}^2-\left(1+\delta \dfrac{F_{ii}}{\Delta u}\right)\innerproduct{t}{a}^2. \]

\emph{Step 2. Restriction on subspace.} To simplify the notation, we denote $\eta=1+\delta \frac{F_{ii}}{\Delta u}$. Recall the linearized equation \eqref{eq: Diff once}, we have $\innerproduct{DF}{t}=f_{i}$ is controlled, where $DF:=(F_{11},\cdots,F_{nn})$. Therefore, we decompose $e_{i}$ and $a$ into tangential part and normal part with respect to $DF$. Their normal parts are
\begin{align}\label{eq: Def of E and L}
    E:=e_{i}-\dfrac{\innerproduct{e_{i}}{DF}}{|DF|^2}DF \quad \text{and}\quad L:=a-\dfrac{\innerproduct{a}{DF}}{|DF|^2}DF.
\end{align}
Now, using Cauchy-Schwartz inequality, we obtain
\begin{align*}
    Q_{i}&=3|t|^2-2\left( \innerproduct{t}{E}+ \dfrac{\innerproduct{e_{i}}{DF}}{|DF|^2}f_{i} \right)^2-\eta\left(\innerproduct{t}{L}+\dfrac{\innerproduct{a}{DF}}{|DF|^2}f_{i}\right)^2\\
    &\ge 3|t|^2-2(1+\theta)\innerproduct{t}{E}^2-\eta(1+\theta)\innerproduct{t}{L}^2\\
    &\quad -2\left(1+\frac{1}{\theta}\right)\dfrac{\innerproduct{e_{i}}{DF}^2}{|DF|^4}f_{i}^2 -\eta\left(1+\frac{1}{\theta}\right)\dfrac{\innerproduct{a}{DF}^2}{|DF|^4}f_{i}^2,
\end{align*}
where $\theta>0$ to be determined later. For last two terms, notice that $|DF|^2=\sum_{i}F_{ii}^2=\sum_{i}(\Delta u-\lambda_{i})^2=(n-2)(\Delta u)^2+|\lambda|^2$ , then from \eqref{eq: est on fi}, we get
\begin{align*}
    \dfrac{\innerproduct{e_{i}}{DF}^2}{|DF|^4}f_{i}^2 &\leq \dfrac{f_{i}^2}{(n-2)(\Delta u)^2+|\lambda|^2}\leq C\Gamma^2,\\
    \dfrac{\innerproduct{a}{DF}^2}{|DF|^4}f_{i}^2&\leq \dfrac{n f_{i}^2}{(n-2)(\Delta u)^2+|\lambda|^2}\leq C\Gamma^2,
\end{align*}
and from Lemma \ref{LEMMA: est for Fii},
\[ \eta=1+\delta\dfrac{F_{ii}}{\Delta u}\leq 1+(1+\varepsilon)\dfrac{2n-2}{n}\leq C, \quad \text{provided}\  \varepsilon\leq 1.\]
Therefore,
\begin{equation}\label{eq: est for Qi}
    \begin{split}
        Q_{i}&\ge 3|t|^2-2(1+\theta)\innerproduct{t}{E}^2-\eta(1+\theta)\innerproduct{t}{L}^2-C\Gamma^2\left(1+\dfrac{1}{\theta}\right)\\
    &:=\widetilde{Q}_{i}-C\Gamma^2\left(1+\dfrac{1}{\theta}\right).
    \end{split}
\end{equation}
Next, we focus on the quadratic form $\widetilde{Q}_{i}$, we will show that $\widetilde{Q}_{i}\ge0$ for some suitable $\theta$. If $t$ is orthogonal to both $E$ and $L$, then $\widetilde{Q}_{i}=3|t|^2\ge 0$. Thus it suffices to assume that $t$ lies in the subspace $\mathrm{span}\{E,L\}$. The matrix associated to the quadratic form is 
\[\widetilde{Q}_{i}=3I-2(1+\theta)E\otimes E-\eta(1+\theta) L\otimes L.  \]
This symmetric matrix has real eigenvalues. In the non-orthogonal basis $\{E,L\}$, the eigenvector equation is
\begin{align}\label{eq:eigenvector equation}
    \begin{pmatrix}
        3-2(1+\theta)|E|^2 & -2(1+\theta)\innerproduct{E}{L}\\
        -\eta(1+\theta)\innerproduct{E}{L} & 3-\eta(1+\theta)|L|^2
    \end{pmatrix}\begin{pmatrix}
        \alpha\\
        \beta
    \end{pmatrix}=\xi\begin{pmatrix}
        \alpha\\
        \beta
    \end{pmatrix}.
\end{align}
It suffices to show that the above coefficient matrix is nonnegetive. We show that both the trace and determinant are  nonnegative. Recall \eqref{eq: Def of E and L}, we have
\begin{align}\label{eq: inner product between E and L}
    |E|^2=1-\dfrac{F_{ii}^2}{|DF|^2},\quad |L|^2=1-\dfrac{2(n-1)f}{|DF|^2},\quad \text{and}\quad \innerproduct{E}{L}=1-\dfrac{(n-1)F_{ii}\Delta u}{|DF|^2}.
\end{align}

\emph{Step 3. Nonnegativity of trace.} By \eqref{eq:eigenvector equation} and \eqref{eq: inner product between E and L}, we have
\begin{align*}
    \mathrm{tr}&=6-2(1+\theta)|E|^2-\eta(1+\theta)|L|^2\\
    &=6-2(1+\theta)\left(1-\dfrac{F_{ii}^2}{|DF|^2} \right)-(1+\theta)\left(1+\delta\dfrac{F_{ii}}{\Delta u}\right)\left(\dfrac{2(n-1)f}{|DF|^2}\right)\\
    &\ge 3-3\theta-(1+\theta)\delta\dfrac{F_{ii}}{\Delta u}\\
    &\ge 3-3\theta-(1+\theta)\delta \dfrac{2n-2}{n} \ge 0
\end{align*}
provided
\begin{align}\label{eq: trace nonnegative condition}
    \delta\leq \dfrac{3(1-\theta)}{1+\theta}\dfrac{n}{2n-2}.
\end{align}

\emph{Step 4. Nonnegativity of determinant.} There is a slight difference between our calculation here and those in \cite{Shankar-Yuan-Duke-22,Shankar-Yuan-25}. Due to the appearance of small perturbation $\theta$, we have an additional negative term here. 

To simplify the notation, we denote $\beta=1+\theta$. By \eqref{eq:eigenvector equation} and \eqref{eq: inner product between E and L}, we have
\begin{align*}
    \det &=\left(3-2\beta|E|^2\right)\left(3-\eta\beta|L|^2\right)-2\eta\beta^2\innerproduct{E}{L}^2\\
    &=9-9\beta-3\beta\delta\dfrac{F_{ii}}{\Delta u}+3\beta\eta\dfrac{2(n-1)f}{|DF|^2}+6\beta\dfrac{F_{ii}^2}{|DF|^2}\\
    &\quad\  +2\eta\beta^2\left\{ \dfrac{2(n-1)F_{ii}\Delta u}{|DF|^2}-\dfrac{2(n-1)f}{|DF|^2}-\dfrac{nF_{ii}^2}{|DF|^2} \right\}\\
    &=-9\theta +\beta(3-2\beta)\eta\dfrac{2(n-1)f}{|DF|^2}\\
    &\quad\  +\beta\left\{-3\delta\dfrac{F_{ii}}{\Delta u}+(6-2n\beta\eta)\dfrac{F_{ii}^2}{|DF|^2} +4(n-1)\beta\eta\dfrac{F_{ii}\Delta u}{|DF|^2}\right\}. 
\end{align*}
For $\theta\leq 1/4$, we have $3-2\beta\ge1/2$. Note that $\beta\ge 1$, $\eta\ge1$, hence
\begin{equation}\label{eq: estimate for det in dynamic case}
    \begin{split}
        \det &\ge -9\theta+\dfrac{(n-1)f}{|DF|^2}+\beta\dfrac{|DF|^2}{F_{ii}\Delta u}\left\{-3\delta\dfrac{|DF|^2}{(\Delta u)^2}+ (6-2n\beta\eta)\dfrac{F_{ii}}{\Delta u}+4(n-1)\beta\eta\right\}\\
        &:=-9\theta+\dfrac{(n-1)f}{|DF|^2}+\beta \dfrac{|DF|^2}{F_{ii}\Delta u} A.
    \end{split}
\end{equation}

\begin{claim}
    The quantity $A\ge 0$ for $n= 4$ or under the dynamic semi-convexity condition \eqref{eq: dynamic semi-convex condition} in higher dimensions.
\end{claim}

\noindent\emph{Proof of Claim.} Denote $y=F_{ii}/\Delta u$, then $\eta=1+\delta y$. Note that $|DF|^2=(n-1)(\Delta u)^2-2f$, we have
\begin{equation}\label{eq: def on the polynomial q_beta,delta}
    \begin{split}
        A&= -3\delta\left((n-1)-\dfrac{2f}{(\Delta u)^2}\right)+(6-2n\beta(1+\delta y))y+4(n-1)\beta(1+\delta y)\\
        &\ge-2n\beta\delta y^2+(4(n-1)\beta\delta-2n\beta+6)y+(n-1)(4\beta-3\delta)\\
        &:=q_{\delta,\beta}(y).
    \end{split}
\end{equation}
It suffices to show that the quadratic polynomial $q_{\delta,\beta}(y)$ is nonnegative. We split the parameter $\theta$ from $q_{\delta,\theta}(y)$ by writing  $q_{\delta,\theta}(y)=q_{\delta}(y)+\theta R_{\delta}(y)$, where
\begin{align*}
    q_{\delta}(y)&=-2n\delta y^2+(4(n-1)\delta-2n+6)y+(n-1)(4-3\delta),\\
    R_{\delta}(y)&=-2n\delta y^2+(4(n-1)\delta-2n)y+4(n-1).
\end{align*}

\emph{Case 1: $n=4$.} We first estimate $q_{\delta}(y)$.  Following the argument in \cite[P497-498]{Shankar-Yuan-25}, recall $\delta=1+\varepsilon$, we write 
\begin{align*}
    q_{\delta}(y)=\left(-8y^2+10y+3\right)+\varepsilon\left(-8y^2+12y-9\right):=q_{1}(y)+\varepsilon r(y).
\end{align*}
For the remainder $r(y)$:
\[ r(y)=8y\left(\dfrac{3}{2}-y\right)-9\ge -9, \]
where we used $0<y=F_{ii}/\Delta u\leq F_{nn}/\Delta u< (2n-2)/n=3/2 $ from Lemma \ref{LEMMA: est for Fii}.

Next, we solve the equation $q_{1}(y)=0$ and get two roots: $y^{-}=-1/4$ and $y^{+}=3/2$. Hence
\[ q_{1}(y)=8\left(y+\dfrac{1}{4}\right)\left(\dfrac{3}{2}-y\right)\ge 2\left(\dfrac{3}{2}-y\right). \]

For $R_{\delta}(y)$, we employ the similar trick. Decompose
\begin{align*}
    R_{\delta}(y)=\left(-8y^2+4y+12\right)+\varepsilon\left(-8y^2+12y \right):=R_{1}(y)+\varepsilon \widetilde{r}(y)
\end{align*}
The remainder $\widetilde{r}(y)\ge 0$ due to $0<y<3/2$. We solve the equation $R_{1}(y)=0$ and get two roots: $\widetilde{y}^{-}=-1, \widetilde{y}^{+}=3/2$. Hence
\begin{align*}
    R_{1}(y)=8(y+1)\left(\dfrac{3}{2}-y\right)\ge0.
\end{align*}
Therefore,
\begin{align*}
    q_{\delta,\beta}(y)\ge 2\left(\dfrac{3}{2}-y\right)-9\varepsilon\ge0,
\end{align*}
provided 
\begin{align*}
    \varepsilon=\dfrac{2}{9}\left(\dfrac{3}{2}-\dfrac{F_{nn}}{\Delta u}\right)=\dfrac{2}{9}\left(\dfrac{1}{2}+\dfrac{\lambda_{\min}}{\Delta u}\right).
\end{align*}

\vspace{0.1cm}
\emph{Case 2: $n\ge 5$ with dynamic semi-convexity \eqref{eq: dynamic semi-convex condition}.} We write
\begin{align*}
    q_{\delta}(y)&=\left(-2ny^2+(2n+2)y+n-1\right)+\varepsilon\left(-2ny^2+4(n-1)y-3(n-1)\right)\\
    &:=q_{1}(y)+\varepsilon r(y).
\end{align*}
Since $0<y<(2n-2)/n$, we have
\begin{align*}
    r(y)=2ny\left(\dfrac{2n-2}{n}-y\right)-3(n-1)>-3(n-1).
\end{align*}
Next, solving the equation $q_{1}(y)=0$ and get
\begin{align}\label{eq: def on y_n+-}
    y_{n}^{\pm}=\dfrac{n+1\pm\sqrt{3n^2+1}}{2n}, \quad y_{n}^{-}<0<y_{n}^{+}.
\end{align}
Thus, by the dynamic semi-convexity condition \eqref{eq: dynamic semi-convex condition}: $0<y\leq F_{nn}/\Delta u\leq y_{n}^{+}$, we have
\begin{align*}
    q_{1}(y)=2n(y-y_{n}^{-})(y_{n}^{+}-y)\ge -2ny_{n}^{-}(y_{n}^{+}-y).
\end{align*}

Similarly, decompose
\begin{align*}
    R_{\delta}(y)&=\left(-2ny^2+(2n-4)y+4(n-1)\right)+\varepsilon\left(-2ny^2+4(n-1)y\right)\\
    &:= R_{1}(y)+\varepsilon\widetilde{r}(y).
\end{align*}
The remainder $\widetilde{r}(y)\ge0$ due to $0<y<(2n-2)/n$. Solving the equation $R_{1}(y)=0$ and get 
\begin{align*}
    \widetilde{y}_{n}^{\pm}=\dfrac{n-2\pm\sqrt{9n^2-12n+4}}{2n}, \quad \widetilde{y}_{n}^{-}<0<\widetilde{y}_{n}^{+}.
\end{align*}
For $n\ge 5$, there holds $\widetilde{y}_{n}^{+}>y_{n}^{+}$. Thus, under the condition $0<y\leq F_{nn}/\Delta u\leq y_{n}^{+}$, we have
\begin{align*}
    R_{1}(y)=2n\left(y-\widetilde{y}_{n}^{-}\right)\left(\widetilde{y}_{n}^{+}-y\right)\ge0.
\end{align*}
Finally, we conclude that
\begin{align*}
    q_{\delta,\beta}(y)\ge -2ny_{n}^{-}\left(y_{n}^{+}-y\right)-3(n-1)\varepsilon\ge 0,
\end{align*}
provided 
\begin{equation}\label{eq: def of epsilon}
    \begin{split}
        \varepsilon&:=-\dfrac{2ny_{n}^{-}}{3(n-1)}\left(y_{n}^{+}-\dfrac{F_{nn}}{\Delta u}\right)\\
        &= \dfrac{\sqrt{3n^2+1}-n-1}{3(n-1)}\left(\dfrac{\sqrt{3n^2+1}-n+1}{2n}+\dfrac{\lambda_{\min}}{\Delta u}\right).
    \end{split}
\end{equation}
The claim holds. \hfill\#

Now, from \eqref{eq: estimate for det in dynamic case}, we conclude that
\begin{align*}
    \det \ge -9\theta+ \dfrac{(n-1)f}{|DF|^2}\ge0,
\end{align*}
provided
\begin{align*}
    \theta\leq \dfrac{(n-1)f}{9|DF|^2}.
\end{align*}

We need to check the trace condition \eqref{eq: trace nonnegative condition} is also fulfilled. 
Write $\varepsilon=C(n)(c_n+\lambda_{\min}/\Delta u)$. One can check that $C(n)$ is increasing with $\lim_{n\to\infty}C(n)=(\sqrt{3}-1)/3$, and $c_{n}$ is decreasing with $c_{2}=(\sqrt{13}-1)/4$. Note that $\lambda_{\min}\leq \Delta u$, then for $n\ge 2$,
\begin{align*}
    \delta=1+\varepsilon\leq 1+ \dfrac{\sqrt{3}-1}{3}\left(\dfrac{\sqrt{13}-1}{4}+1\right)\approx 1.403
\end{align*}
For $\theta\leq 1/100$, we have
\begin{align*}
    \dfrac{3(1-\theta)}{1+\theta}\dfrac{n}{2n-2}\ge \dfrac{3}{2}\cdot\dfrac{99}{101}\approx 1.470.
\end{align*}
Thus \eqref{eq: trace nonnegative condition} holds if we choose
\begin{align*}
    \theta=\min\left\{\dfrac{1}{100}, \dfrac{(n-1)f}{9|DF|^2}\right\}.
\end{align*}

\emph{Step 4. Conclusion.} Combining \eqref{eq: est on fij} \eqref{eq: lower bound for Jacobi} and \eqref{eq: est for Qi}, we finally conclude that
\begin{align*}
    \Delta_{F}b-\varepsilon|\nabla_{F}b|^2&\ge \dfrac{\Delta f}{\Delta u}-\dfrac{C\Gamma^2}{\Delta u}\left(1+\dfrac{1}{\theta}\right)\\
    &\ge -C\Gamma^2(1+\Delta u)+\sum_{i}f_{p_{i}}b_{i} -C\Gamma^2\left(1+\dfrac{|DF|^2}{\Delta u}\right)\\
    &\ge \sum_{i}f_{p_{i}}b_{i}-C\Gamma^2(1+\Delta u).
\end{align*}
Now the proof of Proposition \ref{PROP: Almost Jacobi ineq} is complete.
\end{proof}

\section{Doubling Inequality}\label{Section: Doubling ineq}
    By the help of the almost Jacobi inequality established in Proposition \ref{PROP: Almost Jacobi ineq}, we now can use Guan-Qiu's test function in \cite[Theorem 4]{Guan-Qiu-Duke-19}\cite[Lemma 4]{Qiu-sigma2Hessian-24} to prove an a priori doubling inequality for the Hessian. 
    
    The proof is almost identical to that of \cite[Proposition 3.1]{Shankar-Yuan-25}, except that here our almost Jacobi inequality includes an additional remainder term. Nevertheless, this does not affect the overall argument. For completeness, we still provide the complete details of the proof here.

\begin{proposition}\label{PROP: Doubling inequality}
    Let $u$ be a smooth admissible solution to \eqref{sigma2 eq} on $B_{3}\subset\bb{R}^n$.
    Assume that $n=4$ or $u$ satisfies the dynamic semi-convex condition \eqref{eq: dynamic semi-convex condition} when $n\ge 5$:
    \begin{align*}
        \lambda_{\text{min}}(D^2u)\ge -c(n)\Delta u\quad with\quad c(n)=\dfrac{\sqrt{3n^2+1}-n+1}{2n}.
    \end{align*}
    
    Then we have the following doubling inequality
    \begin{equation}
        \sup_{B_{2}}\Delta u\leq C\exp\left(C\|u\|_{C^{1}(B_{3})}^{6}\right)\sup_{B_{1}}\Delta u
    \end{equation}
    where $C$ depending only on $n, \|f\|_{C^{1,1}(B_{3})}$ and $\|\frac{1}{f}\|_{L^{\infty}(B_{3})}$.
\end{proposition}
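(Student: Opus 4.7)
The plan is to adapt the doubling argument of Shankar-Yuan \cite[Proposition 3.1]{Shankar-Yuan-25} to our variable right-hand side setting, combining Proposition \ref{PROP: Almost Jacobi ineq} with Guan-Qiu's test function from \cite{Guan-Qiu-Duke-19, Qiu-sigma2Hessian-24}. Write $b = \log\Delta u$, $\Gamma = \|u\|_{C^1(B_3)} + 1$, and $M_k = \sup_{B_k}\Delta u$. In logarithmic form the claim reads $\sup_{B_2} b \leq \sup_{B_1} b + C(1+\Gamma^6)$, and I may assume $M_2 \geq eM_1$, otherwise the inequality is trivial.

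On $B_3$ I introduce the Guan-Qiu test function
\[
W(x) \;=\; \eta(x)^{2N}\, b(x)\, \exp\bigl(\tfrac{A}{2}|Du(x)|^2\bigr),
\]
where $\eta$ is a smooth cutoff with $\eta \equiv 1$ on $\overline{B_1}$, $\eta > 0$ on $B_3$, vanishing on $\partial B_3$, and with $\eta \geq c_0 > 0$ on $\overline{B_2}$; the constants $N, A \geq 1$ will be chosen as polynomials in $\Gamma$. Let $x_0 \in B_3$ attain the maximum of $W$. If $x_0 \in \overline{B_1}$, comparing $W(x_0)$ with $W$ at a $B_2$-maximizer of $b$ and using $|Du|\leq \Gamma$ gives $\sup_{B_2} b \leq \sup_{B_1} b + CN + A\Gamma^2$, and the doubling conclusion follows from the planned polynomial-in-$\Gamma$ choice of $N,A$. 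Otherwise $x_0 \in B_3 \setminus \overline{B_1}$, where the maximum principle applies: $D\log W(x_0) = 0$ and $\Delta_F\log W(x_0) \leq 0$.

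At $x_0$ the critical-point identity reads $Db/b = -2N\,D\log\eta - A\,D^2u\cdot Du$. Substituting this, together with the almost Jacobi bound from Proposition \ref{PROP: Almost Jacobi ineq} for $\Delta_F b$, into the maximum inequality produces a pointwise inequality whose main reserve is the coercive term $A\sum_{i,k} F_{ii} u_{ki}^2$ extracted from $\Delta_F(|Du|^2/2) = \sum u_k f_k + \sum_{i,k} F_{ii} u_{ki}^2$ via the linearized equation $\Delta_F u_k = f_k$; by Lemma \ref{LEMMA: est for Fii} this is of order $A\,\Delta u\cdot |D^2u|^2$. Three families of error terms must be absorbed against it via Cauchy-Schwarz: (i) cutoff contributions of order $N\Delta u/\eta^2$; (ii) cross terms of order $NA\Gamma\Delta u$ produced when the critical-point identity is used to eliminate $Db$; and (iii) the two new contributions from Proposition \ref{PROP: Almost Jacobi ineq}, namely $-C\Gamma^2(1+\Delta u)/b$ (tame in our regime $b \geq 1$) and $(1/b)\sum f_{p_i} b_i$, controlled by $\tfrac{\varepsilon}{2b}|\nabla_F b|^2 + C\Gamma^2\sum F_{ii}/(\varepsilon b)$.

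The main obstacle, exactly as in \cite{Shankar-Yuan-25}, is that the coefficient $\varepsilon$ of Proposition \ref{PROP: Almost Jacobi ineq} degenerates as $\lambda_{\min}/\Delta u$ approaches the critical threshold $-1/2$ in dimension $4$, inflating the $1/\varepsilon$ loss in the absorption of (iii) precisely where the Jacobi reserve shrinks. Careful bookkeeping of absorptions (i)--(iii) fixes $N$ and $A$ as polynomials in $\Gamma$ so that every bad term closes against $A\sum F_{ii}u_{ki}^2$. Propagating the resulting pointwise bound through the definition of $W$ yields $W(x_0) \leq \exp(C\Gamma^6)\,\sup_{B_1} b\cdot e^{A\Gamma^2}$, and comparing with $W$ at a $B_2$-maximizer of $b$ (where $\eta \geq c_0$) gives the desired $\sup_{B_2}\Delta u \leq C\exp(C\Gamma^6)\sup_{B_1}\Delta u$. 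The exponent $6$ reflects the compounded $\Gamma^2$ losses accumulated in each of the three absorption steps.
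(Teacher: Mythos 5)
There are genuine gaps; two of them are structural and would make the argument fail as written. First, your test function $W=\eta^{2N}\,b\,\exp(\tfrac{A}{2}|Du|^2)$ multiplies $b=\log\Delta u$ itself, so the final comparison between $W(x_0)$ and $W$ at a $B_2$-maximizer of $b$ can only yield a \emph{multiplicative} bound $\sup_{B_2}b\leq K\,\sup_{B_1}b$ with $K=c_0^{-2N}e^{A\Gamma^2/2}$ (or $K=e^{C\Gamma^6}$), i.e. $\sup_{B_2}\Delta u\leq(\sup_{B_1}\Delta u)^{K}$, a power-type bound rather than the doubling inequality; your intermediate claim ``$\sup_{B_2}b\leq\sup_{B_1}b+CN+A\Gamma^2$'' does not follow from this multiplicative structure. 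The paper avoids exactly this by working with the shifted quantity $\overline{b}=b-\sup_{B_1}b$ inside an \emph{additive} test function $P=2\log\rho+\alpha(x\cdot Du-u)+\tfrac{\beta}{2}|Du|^2+\log\max\{\overline{b},\gamma\}$; bounding $P$ then bounds $\overline{b}$ at the comparison point by a constant, which is what converts into $\sup_{B_2}\Delta u\leq e^{C\Gamma^6}\sup_{B_1}\Delta u$.

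Second, your designated coercive term is not coercive. At a point where $D^2u$ is diagonalized, $\sum_{i,k}F_{ii}u_{ki}^2=\sum_iF_{ii}\lambda_i^2$, and by Lemma \ref{LEMMA: est for Fii} the weight on the dominant eigenvalue satisfies only $F_{11}\geq\sigma_2/\sigma_1$; in the typical hard regime $\lambda_1\approx\Delta u$, $\lambda_i=O(f/\Delta u)$ for $i\geq2$, one gets $\sum_iF_{ii}\lambda_i^2=O(f\,\Delta u)$, not $A\,\Delta u|D^2u|^2\sim(\Delta u)^3$. Consequently your cutoff error $N\Delta u/\eta^2$ (which carries an $\eta^{-2}$ singularity) and the $1/\varepsilon$ losses from absorbing $\sum_if_{p_i}b_i$ cannot ``close against'' this term uniformly, and ``careful bookkeeping'' does not repair it: the cubic coercivity of $\beta\sum_iF_{ii}\lambda_i^2$ is available precisely and only when $\lambda_{\min}\sim-c_n\Delta u$, which is the same regime where $\varepsilon$ degenerates. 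This is why the paper's proof splits into Case I (degenerate $\varepsilon$, compensated by $F_{nn}\lambda_n^2\gtrsim(\Delta u)^3$ together with the critical-point identity) and Case II (nondegenerate $\varepsilon$, where the reserve is the Jacobi term $\tfrac{c_n}{4}\overline{b}\,|\nabla_F\overline{b}|^2/\overline{b}^2$, made effective through the critical-point identity, the Guan--Qiu term $\alpha(x\cdot Du-u)$ when the large coordinate aligns with the top eigendirection, and the localization gradient $\rho_k^2\geq4/n$ otherwise). Your proposal omits the Guan--Qiu term entirely and has no substitute mechanism in Case II: if $Du$ and $D\eta$ are nearly orthogonal to the top eigendirection at the maximum point, your critical-point identity gives no useful lower bound on $|\nabla_F b|$, and the argument stalls. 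Finally, note that Proposition \ref{PROP: Almost Jacobi ineq} already carries the remainder $-C\Gamma^2(1+\Delta u)$ and the drift $\sum_if_{p_i}b_i$; in the paper these are absorbed into $-C\Delta u/\rho^2$ via the gradient identity, not via a $1/\varepsilon$ Cauchy--Schwarz, which is the safer route in the degenerate regime.
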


\begin{proof}
    Consider the following test function defined on $B_{3}$:
    \[ P(x)=2\log\rho(x)+\alpha(x\cdot Du-u)+\dfrac{\beta}{2}|Du|^2+ \log\max\{\overline{b},\gamma\}, \]
    where $\rho(x)=9-|x|^2$, $\overline{b}=b-\sup_{B_{1}}b$ for $b=\log\Delta u$ and $\alpha,\beta,\gamma$ are constants to be fixed later. Denote $\Gamma=3\|Du\|_{L^{\infty}(B_{3})}+\|u\|_{L^{\infty}(B_{3})}+1$. Throughout this proof, $C$ denotes the universal constant depending on $n,\|f\|_{C^{1,1}}$ and $\|\frac{1}{f}\|_{L^{\infty}}$ which may changes line by line. Small constants $\alpha,\beta$ and large constant $\gamma$ will be chosen to  also depend only on these.

    Suppose $P$ attains its maximum over $B_{3}$ at $x^*\in B_{3}$. If $|x^*|\leq 1$, we have 
    \begin{equation}\label{eq: upper bound for maxP-1}
        P(x^*)\leq C+\alpha \Gamma+\beta\Gamma^2+\log\gamma.
    \end{equation}
    Thus we assume that $1<|x^*|<3$. If $\overline{b}(x^*)\leq \gamma$, we again get \eqref{eq: upper bound for maxP-1}, hence we also assume that $\overline{b}(x^*)>\gamma$ is sufficiently large.

    Choosing an appropriate coordinate such that $D^2u$ is diagonal at $x^*$ and $D^2u(x^*)=\mathrm{diag}\{\lambda_{1},\cdots,\lambda_{n}\}$ with $\lambda_{1}\ge\cdots\ge\lambda_{n}$. Hereafter, all computations are performed  at $x^*$. Recall that $x^*$ is the maximum point, we have $DP=0, D^2P\leq 0$ at $x^*$. That is, 
    \begin{equation}\label{eq: DP=0}
        \begin{split}
            0=P_{i}&=\dfrac{2\rho_{i}}{\rho}+\alpha  \sum_{k} x_{k}u_{ki}+\beta \sum_{k}u_{k}u_{ki}+\dfrac{\overline{b}_{i}}{\overline{b}} \\
        &=\dfrac{2\rho_{i}}{\rho}+\alpha x_{i}\lambda_{i}+\beta u_{i}\lambda_{i}+\dfrac{\overline{b}_{i}}{\overline{b}}
        \end{split}
    \end{equation}
    for any $i=1,2,\cdots,n$, and
    \begin{align}
        0\ge(P_{ij})=\left(\dfrac{2\rho_{ij}}{\rho}-\dfrac{2\rho_{i}\rho_{j}}{\rho^2}+\alpha u_{ij}+\sum_{k}(\alpha x_{k}u_{kij}+\beta u_{ki}u_{kj}+\beta u_{k}u_{kij})+\dfrac{\overline{b}_{ij}}{\overline{b}}-\dfrac{\overline{b}_{i}\overline{b}_{j}}{\overline{b}^2}\right).
    \end{align}
    Contracting with $(F_{ij})= \mathrm{diag}\{F_{11},\cdots,F_{nn}\}$ where $F_{ii}=\Delta u-\lambda_{i}$, and using 
    \[ \sum_{i}F_{ii}=(n-1)\Delta u,\quad  \sum_{i,j}F_{ij}u_{ij}=2f,\quad \sum_{i,j}F_{ij}u_{ijk}=f_{k}.  \]
    We conclude that
    \begin{equation}\label{eq: negative inequality-1}
    \begin{split}
        0\ge \sum_{i}F_{ii}P_{ii}&=2\dfrac{\sum_{i}F_{ii}\rho_{ii}}{\rho}-2\dfrac{\sum_{i}F_{ii}\rho_{i}^2}{\rho^2}+2\alpha f+\alpha\sum_{k}x_{k}f_{k} \\
          &\quad +\beta\sum_{i}F_{ii}\lambda_{i}^2+\beta \sum_{k}u_{k}f_{k}+\dfrac{\Delta_{F}\overline{b}}{\overline{b}}-\dfrac{|\nabla_{F}\overline{b}|^2}{\overline{b}^2}\\
          &\ge -C\dfrac{\Delta u}{\rho^2}-C(\alpha+\beta\Gamma)|Df|+\beta \sum_{i}F_{ii}\lambda_{i}^2+\dfrac{\Delta_{F}\overline{b}}{\overline{b}}-\dfrac{|\nabla_{F}\overline{b}|^2}{\overline{b}^2}.
    \end{split}
    \end{equation}
    From the almost Jacobi inequality, Proposition \ref{PROP: Almost Jacobi ineq}, we have $\Delta_{F}\overline{b}\ge\varepsilon |\nabla_{F}\overline{b}|^2-C\Gamma^2(1+\Delta u)+\sum_{i}f_{p_{i}}\overline{b}_{i}$ in dimension 4 or under the assumption of dynamic semi-convexity in higher dimensions, where $\varepsilon=C(n)(c_{n}+\lambda_{\min}/\Delta u)$. By substituting this into \eqref{eq: negative inequality-1} and using \eqref{eq: est on fi}, we get
    \begin{equation*}
        \begin{split}
            0\ge -C\dfrac{\Delta u}{\rho^2}-C(\alpha+\beta\Gamma)(\Gamma+\Delta u)+\beta\sum_{i}F_{ii}\lambda_{i}^2-C\dfrac{\Gamma^2}{\gamma}(1+\Delta u)+\sum_{i}f_{p_{i}}\dfrac{\overline{b}_{i}}{\overline{b}}\\
            \quad +\left(c_{n}+\dfrac{\lambda_{min}}{\Delta u}\right)\dfrac{|\nabla_{F}\overline{b}|^2}{\overline{b}}-C\dfrac{|\nabla_{F}\overline{b}|^2}{\overline{b}^2},
        \end{split}
    \end{equation*}
    where we divided $C(n)$ and enlarged $C$. We choose 
    \begin{equation}\label{eq: condition on alpha,beta,gamma}
        \alpha\leq 1,\qquad \beta\Gamma\leq 1,\qquad \gamma\ge\Gamma^2.
    \end{equation}
    Under these conditions, we can estimate
    \[ C(\alpha+\beta\Gamma)(\Gamma+\Delta u)+C\dfrac{\Gamma^2}{\gamma}(1+\Delta u)\leq C\Delta u\leq C\dfrac{\Delta u}{\rho^2}. \]
    Moreover, applying the equation \eqref{eq: DP=0} yields
    \begin{align*}
        \left|\dfrac{\overline{b}_{i}}{\overline{b}}\right|=\left|\dfrac{2\rho_{i}}{\rho}+\alpha x_{i}\lambda_{i}+\beta u_{i}\lambda_{i}\right|\leq \dfrac{C}{\rho}+C(\alpha+\beta\Gamma)\Delta u\leq C\dfrac{\Delta u}{\rho^2}.
    \end{align*}
    Therefore, we obtain
    \begin{equation}\label{eq: negative inequality-2}
        \begin{split}
            0\ge -C\dfrac{\Delta u}{\rho^2}+\beta\sum_{i}F_{ii}\lambda_{i}^2+\left(c_{n}+\dfrac{\lambda_{min}}{\Delta u}\right)\dfrac{|\nabla_{F}\overline{b}|^2}{\overline{b}}-C\dfrac{|\nabla_{F}\overline{b}|^2}{\overline{b}^2},
        \end{split}
    \end{equation}
    If the nonnegative coefficient $c_{n}+\lambda_{\min}/\Delta u$ has a positive lower bound, now the almost Jacobi inequality becomes Jacobi inequality, Qiu's argument in \cite[Lemma 4]{Qiu-sigma2Hessian-24} is still valid. In the other case, we have $\lambda_{\min}/\Delta u$ is close to $-c_{n}$, thus the almost Jacobi inequality degenerates, As a compensation, $\lambda^2_{\min}$ is comparable with $(\Delta u)^2$, this will give us a desired positive term in $\beta\sum_{i}F_{ii}\lambda_{i}^2$.

     \vspace{0.2cm}
     \emph{Case I: $-c_{n}\leq \lambda_{\min}/\Delta u\leq -c_{n}/2$.} Now \eqref{eq: negative inequality-2} becomes
     \begin{equation}\label{eq: negative inequality-3}
         \begin{split}
             0&\ge -C\dfrac{\Delta u}{\rho^2}+\beta F_{nn}^2\lambda_{n}^2-C\dfrac{\sum_{i}F_{ii}\overline{b}_{i}^2}{\overline{b}^2}\ge -C\dfrac{\Delta u}{\rho^2}+\dfrac{c_{n}^2\beta}{4}(\Delta u)^3-C\dfrac{\sum_{i}F_{ii}\overline{b}_{i}^2}{\overline{b}^2},
         \end{split}
     \end{equation}
     where we used $\lambda_{n}^2\ge c_{n}^2(\Delta u)^2/4$ and $F_{nn}=\Delta u-\lambda_{n}\ge \Delta u$. From the equation \eqref{eq: DP=0} again, we have
     \[ \dfrac{\overline{b}_{i}^2}{\overline{b}^2}=\left(\dfrac{2\rho_{i}}{\rho}+\alpha x_{i}\lambda_{i}+\beta u_{i}\lambda_{i}\right)^2\leq \dfrac{C}{\rho^2}+C(\alpha^2+\beta^2\Gamma^2)(\Delta u)^2. \]
     Combining with Lemma \ref{LEMMA: est for Fii}: $F_{ii}\leq C\Delta u$, we deduce from \eqref{eq: negative inequality-3} that
     \begin{equation*}
         \beta (\Delta u)^3\leq C\dfrac{\Delta u}{\rho^2}+C\Gamma+C(\alpha^2+\beta^2\Gamma^2)(\Delta u)^3.
     \end{equation*}
     We assume that $\alpha,\beta$ are sufficiently small so that
     \begin{equation}\label{eq: condition on alpha,beta-1}
         \alpha^2\leq \dfrac{\beta}{3C},\qquad \beta\leq \dfrac{1}{3C\Gamma^2},
     \end{equation}
     then we get $\rho^2\overline{b}\leq \rho^2(\Delta u)^2\leq C(\beta^{-1}+\Gamma)\leq C\beta^{-1}$. This leads an upper bound on $P$: 
     \begin{equation}\label{eq: upper bound for maxP-2}
         P(x^*)\leq C+\log \beta^{-1}.
     \end{equation}

     \vspace{0.2cm}
     \emph{Case II. $\lambda_{\min}/\Delta u\ge-c_{n}/2$.} Now \eqref{eq: negative inequality-2} becomes
     \begin{equation}\label{eq: negative inequality-4}
         \begin{split}
             0&\ge -C\dfrac{\Delta u}{\rho^2}+\beta\sum_{i}F_{ii}\lambda_{i}^2+\left(\dfrac{c_{n}}{2}\overline{b}-C\right)\dfrac{|\nabla_{F}\overline{b}|^2}{\overline{b}^2}\\
         &\ge -C\dfrac{\Delta u}{\rho^2}+\beta\sum_{i}F_{ii}\lambda_{i}^2+\dfrac{c_n \overline{b}}{4}\dfrac{\sum_{i}F_{ii}\overline{b}_{i}^2}{\overline{b}^2},
         \end{split}
     \end{equation}
     provided
     \begin{equation}\label{eq: condition on gamma-1}
      \gamma\ge\dfrac{C}{4c_{n}}.  
     \end{equation}

      We have assumed that $1<|x^*|<3$, so there is at least one $k\in \{1,2,\cdots, n\}$, such that $x_{k}^2>1/n$, where $x^{*}=(x_{1},\cdots,x_{n})$. The proof will be divided into two subcases based on whether $k$ is equal to $1$.

     \vspace{0.1cm}
     
     \emph{Subcase II-1: $x_{1}^2>1/n$.} Suppose 
     \begin{equation}\label{eq: condition on alpha,beta-2}
         \beta\leq \dfrac{\alpha}{2n\Gamma},
     \end{equation}
     from the $DP(x^*)=0$ equation \eqref{eq: DP=0}, we have
     \begin{equation*}
         \begin{split}
             \dfrac{\overline{b}_{1}^2}{\overline{b}^2}&=\left(\dfrac{2\rho_{1}}{\rho}+\alpha x_{1}\lambda_{1}+\beta u_{1}\lambda_{1}\right)^2\ge\dfrac{1}{2}\left(\alpha x_{1}\lambda_{1}+\beta u_{1}\lambda_{1}\right)^2-\dfrac{C}{\rho^2}\\
             &\ge \dfrac{\alpha^2}{8n^2}\lambda_{1}^2-\dfrac{C}{\rho^2}\ge \dfrac{\alpha^2}{8n^4}(\Delta u)^2-\dfrac{C}{\rho^2}.
         \end{split}
     \end{equation*}
    If $\alpha^2(\Delta u)^2/n^4\leq 16C/\rho^2$, we have $\rho^2\overline{b}\leq \rho^2(\Delta u)^2\leq C\alpha^{-2}$. This also leads a desired upper bound on $P$:
    \begin{align}\label{eq: upper bound for maxP-3}
        P(x^*)\leq C+\log \alpha^{-2}.
    \end{align}
    In the alternative case, there holds
    \begin{equation*}
        \dfrac{\overline{b}_{1}^2}{\overline{b}^2}\ge \dfrac{\alpha^2}{16n^4}(\Delta u)^2.
    \end{equation*}
    Substituting into \eqref{eq: negative inequality-4}, we obtain
    \begin{equation*}
        \begin{split}
            0&\ge-C\dfrac{\Delta u}{\rho^2}+\dfrac{c_{n}\overline{b}}{4}F_{11}\dfrac{\overline{b}_{1}^2}{\overline{b}^2}\ge -C\dfrac{\Delta u}{\rho^2}+\dfrac{c_{n}\alpha^2}{64n^4}\overline{b}F_{11}(\Delta u)^2
        \end{split}
    \end{equation*}
    Note that $F_{11}\Delta u\ge f$ from Lemma \ref{LEMMA: est for Fii}. By dividing a positive constant and enlarging $C$ , we deduce that $\rho^2\overline{b}\leq C\alpha^{-2}$. This still implies 
    \begin{equation}\label{eq: upper bound for maxP-4}
        P(x^*)\leq C+\log\alpha^{-2}.
    \end{equation}

    \vspace{0.1cm}
    \emph{Subcase II-2: $x_{k}^2\ge1/n$ for some $k\ge2$.} From the $DP(x^*)=0$ equation \eqref{eq: DP=0}, we deduce that
    \begin{equation*}
        \dfrac{\overline{b}_{k}^2}{\overline{b}^2}=\left(\dfrac{2\rho_{k}}{\rho}+\alpha x_{k}\lambda_{k}+\beta u_{k}\lambda_{k}\right)^2\ge \dfrac{2\rho_{k}^2}{\rho^2}-\alpha^2 x_{k}^2\lambda_{k}^2-\beta^2 u_{k}^2\lambda_{k}^2\ge \dfrac{8}{n\rho^2}-C(\alpha^2+\beta^2\Gamma^2)\lambda_{k}^2,
    \end{equation*}
    where we used $\rho_{k}^2=4x_{k}^2\ge 4/n$. By substituting this into \eqref{eq: negative inequality-4}, and using the estimate: $F_{kk}\ge (1-1/\sqrt{2})\Delta u\ge \Delta u/10$,  we obtain
    \begin{equation}\label{eq: negative inequality-5}
        \begin{split}
            0&\ge -C\dfrac{\Delta u}{\rho}+\beta F_{kk}\lambda_{k}^2+\dfrac{c_{n}}{4}\overline{b}F_{kk}\dfrac{\overline{b}_{k}^2}{\overline{b}^2}\\
            &\ge -C\dfrac{\Delta u}{\rho}+\dfrac{\beta}{10}\Delta u\lambda_{k}^2+\dfrac{c_{n}}{40}\gamma\Delta u\left(\dfrac{8}{n\rho^2}-C(\alpha^2+\beta^2\Gamma^2)\lambda_{k}^2\right)\\
            &\ge \dfrac{\Delta u}{\rho^2}\left(\dfrac{c_{n}}{5n}\gamma-C\right)+\Delta u\lambda_{k}^2\left(\dfrac{\beta}{10}-C\alpha^2\gamma-C\beta^2\Gamma^2\gamma\right).
        \end{split}
    \end{equation}
    First, we choose $\gamma$ sufficiently large so that
    \begin{align}\label{eq: condition on gamma-2}
        \gamma\ge \dfrac{10nC}{c_{n}}.
    \end{align}
    Then we choose $\alpha,\beta$ sufficiently small so that
    \begin{align}\label{eq: condition on alpha,beta-3}
        \alpha^2\leq \dfrac{\beta}{30C\gamma},\qquad \beta\leq \dfrac{1}{30C\Gamma^2\gamma}.
    \end{align}
    Now \eqref{eq: negative inequality-5} leads a contradiction.

    Finally, for $C$ large enough, we take
    \[ \gamma=\dfrac{10n C}{c_{n}}\Gamma^2,\qquad \alpha=\dfrac{1}{60nC\Gamma\gamma},\qquad \beta=\dfrac{1}{120n^2C\Gamma^2\gamma}.  \]
    One can check that previous conditions on $\alpha,\beta,\gamma$: \eqref{eq: condition on alpha,beta,gamma} \eqref{eq: condition on alpha,beta-1} \eqref{eq: condition on gamma-1} \eqref{eq: condition on alpha,beta-2} \eqref{eq: condition on gamma-2} and \eqref{eq: condition on alpha,beta-3} are  satisfied. Hence, from the upper bound on $P$: \eqref{eq: upper bound for maxP-1} \eqref{eq: upper bound for maxP-2} \eqref{eq: upper bound for maxP-3} and \eqref{eq: upper bound for maxP-4}, we conclude that
    \begin{equation*}
       \max_{B_{2}}P\leq P(x^*)\leq C+\max\{\log\alpha^{-2},\log\beta^{-1},\log\gamma\} \leq C+\log \Gamma^6.
    \end{equation*}
    By the definition of $P$ and $\overline{b}$, we finally obtain the doubling inequality:
    \[ \dfrac{\max_{B_{2}}\Delta u}{\max_{B_{1}}\Delta u}\leq \exp(C\Gamma^6). \]
    Now the proof is complete. 
\end{proof}

\section{Alexandrov Regularity}\label{Section: Alexandrov Regularity}

In \cite[Section 4]{Shankar-Yuan-25}, they proved a Alexandrov type regularity theorem for viscosity solutions to $\sigma_{2}=1$. As mentioned in their paper, the main two ingredients in the proof of the classic Alexandrov theorem for convex functions are ``$W^{2,1}$ estimate'' and the gradient (or Lipschitz) estimate, see \cite[Theorem 6.9]{Evans-Gariepy}. For general $k$-convex functions, there is no gradient estimate, but only H\"older estimates for $k>n/2$ \cite{Trudinger-Wang-Annals-99}. This is sufficient to prove the Alexandrov regularity, see \cite{Chaudhuri-Trudinger-05}. For 2-convex solutions to the sigma-2 equation $\sigma_{2}=1$, thanks to \cite{Trudinger-CPDE-97,Trudinger-Wang-Annals-99, Chou-Wang-01}, we have the desired gradient estimate. Thus the method of \cite{Evans-Gariepy,Chaudhuri-Trudinger-05} can go through verbatim. 

For general right-hand side terms $f(x,u,Du)$, the a priori gradient estimates in \cite{Trudinger-CPDE-97,Chou-Wang-01} may not be directly applicable. This limitation arises since the solvability of Dirichlet problems becomes a nontrivial issue in such cases, where their a priori estimates may not be extended to viscosity solutions through smooth approximation procedures. To address this challenge, we employ Labutin's potential estimate \cite{Labutin-Duke-02} to establish Hölder estimates for viscosity solutions. This approach ultimately enables us to prove the Alexandrov regularity theorem for Lipschitz solutions of the $\sigma_{2}=f$ equation with $L^{\infty}$ right-hand side terms. This is sufficient for our final compactness argument. 

\begin{proposition}\label{PROP: Alexandrov type thm}
    Let $u$ be a 2-convex Lipschitz viscosity solution of the sigma-2  equation $\sigma_{2}(D^2u)=f$ on $B_{1}\subset\bb{R}^n$. Suppose $n\ge 4$ and $f$ is bounded, then $u$ is twice differentiable almost everywhere in $B_{1}$. That is, for almost every $x\in B_{1}$, there is a quadratic polynomial $Q$ such that 
    \[ |u(y)-Q(y)|=o(|y-x|^2),\quad as\ y\to x. \]
\end{proposition}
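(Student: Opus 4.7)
The plan is to follow the scheme of Shankar-Yuan \cite{Shankar-Yuan-25}, who prove the analogous theorem for $f\equiv 1$ by adapting the classical Alexandrov theorem proofs of Evans-Gariepy \cite{Evans-Gariepy} and Chaudhuri-Trudinger \cite{Chaudhuri-Trudinger-05} to the Hessian-measure framework. The only genuinely new point, as foreshadowed in the discussion above, is that the gradient estimates of Trudinger \cite{Trudinger-CPDE-97} and Chou-Wang \cite{Chou-Wang-01} are not directly available for viscosity solutions with $L^\infty$ right-hand side; in their place I would use Labutin's Riesz potential estimate \cite{Labutin-Duke-02} to control the rescaled defect.

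First I would establish the Hessian-measure framework. Since $u$ is $2$-convex, the Trudinger-Wang $k$-Hessian measures $\mu_k[u]$ are well defined for $k=1,2$, with $\mu_2[u] = f\,dx$ and $\mu_1[u] = \Delta u$ a nonnegative Radon measure. By Lemma \ref{LEMMA:Sharp estimate for minimal eigenvalue}, smooth $2$-convex approximations $u_\epsilon$ satisfy $|\partial_{ij} u_\epsilon| \leq C\Delta u_\epsilon$ pointwise, so passing to the limit, $D^2u$ exists on $B_1$ as a symmetric-matrix-valued Radon measure dominated in total variation by $\Delta u$. Decompose $D^2u = A(x)\,dx + \nu^s$ into absolutely continuous and singular parts, and let $E \subset B_{1}$ be the set of $x_0$ such that (i) $Du(x_0)$ exists, (ii) $x_0$ is a Lebesgue point of $A$, and (iii) $\lim_{r\to 0} r^{-n}|\nu^s|(B_r(x_0)) = 0$. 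Then $|B_1 \setminus E| = 0$ by Rademacher's theorem (using the Lipschitz hypothesis) together with Lebesgue-Besicovitch differentiation of measures.

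For $x_0 \in E$, introduce the candidate quadratic
\[
Q(y) = u(x_0) + Du(x_0) \cdot (y-x_0) + \tfrac{1}{2}(y-x_0)^T A(x_0)(y-x_0),
\]
and the rescaled defect $v_r(z) = r^{-2}[u(x_0+rz) - Q(x_0+rz)]$ on $B_1$. Then $v_r(0)=0$, $Dv_r(0) = 0$, and the distributional Hessian $D^2 v_r$ is a signed Radon measure whose total variation $\|D^2 v_r\|_{\mathrm{TV}(B_1)}$ tends to zero as $r\to 0$ by (ii) and (iii). The theorem reduces to showing $\|v_r\|_{L^\infty(B_{1/2})} \to 0$, since rescaling then yields $\sup_{|y-x_0|\le r/2}|u(y) - Q(y)| = o(r^2)$, which is exactly twice differentiability of $u$ at $x_0$.

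The main obstacle is this last implication: bootstrapping the vanishing of $\|D^2 v_r\|_{\mathrm{TV}(B_1)}$ to the uniform vanishing of $v_r$. I would apply Labutin's potential estimate to the positive and negative parts of the signed measure $\Delta v_r$ separately, obtaining an inequality of the form
\[
\mathrm{osc}_{B_{1/2}} v_r \leq C\,\sup_{z \in B_{3/4}} \mathbf{W}_r(z),
\]
where $\mathbf{W}_r(z)$ is a Riesz-type potential integral of $|\Delta v_r|$ on $B_{3/4}$; properties (ii) and (iii) then force $\mathbf{W}_r \to 0$. Two subtleties will require care. First, $v_r$ itself is not $2$-convex, since subtracting $Q$ can drive the Hessian out of $\Gamma_{2}$, so Labutin's estimate must be invoked in its signed subharmonic formulation rather than on the $2$-Hessian measure directly. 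Second, the rescaling of the singular part $\nu^s$ inside $D^2 v_r$ has to be tracked so that condition (iii) genuinely yields $o(1)$, rather than merely $O(1)$, in the relevant potential integral — this is the only nontrivial piece of bookkeeping needed to complete the proof and conclude twice differentiability at almost every $x_0 \in B_1$.
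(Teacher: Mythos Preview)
Your setup through the Hessian-measure decomposition and the identification of the full-measure set $E$ is essentially the same as the paper's. The gap lies in the final bootstrap, and you have in fact flagged it yourself without resolving it: there is no ``signed subharmonic formulation'' of Labutin's estimate that does what you need. Labutin's Wolff-potential bound \cite{Labutin-Duke-02} is a two-sided estimate for \emph{nonnegative $k$-convex} functions in terms of the Wolff potential of their $k$-Hessian measure; the nonnegativity and the $k$-convexity are both structural hypotheses, not cosmetic. Decomposing the signed measure $\Delta v_r$ into positive and negative parts does not yield a decomposition $v_r = w^+ - w^-$ with $w^\pm$ nonnegative and subharmonic (let alone $2$-convex), so there is nothing to which the potential estimate can be legitimately applied. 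Since $v_r$ is obtained by subtracting a genuine quadratic from $u$, it lies in no cone on which Labutin's theorem operates.

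The paper circumvents this exactly by never leaving the $2$-convex world when invoking the estimate. One subtracts only the \emph{affine} part, setting $g(y)=u(y)-u(x_0)-Du(x_0)\cdot(y-x_0)$, so that $g$ is still a $2$-convex viscosity solution of $\sigma_2(D^2 g)=f$. Labutin's potential estimate is then processed into a Harnack-type inequality for nonnegative $2$-convex solutions, hence an oscillation decay and a weighted interior H\"older estimate (Proposition~\ref{PROP: weighted Holder estimate}) of the form
\[
[g]_{0,\gamma;B_{2r}(x_0)}^{(n)} \le C\Big(\int_{B_{2r}(x_0)}|g|+r^{n+2}\Big).
\]
Because the quadratic correction $\tfrac12(y-x_0)^T A(x_0)(y-x_0)$ has an explicit $C^{0,\gamma}$ seminorm of size $O(r^{2-\gamma})$, this transfers to a H\"older bound on $h=g-\text{(quadratic)}$. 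Combined with the $L^1$ smallness $\fint_{B_r}|h|=o(r^2)$ coming from the Evans--Gariepy argument, a standard density-point argument upgrades to $\sup_{B_{r/2}}|h|=o(r^2)$. The missing idea in your proposal is precisely this: keep the potential/H\"older estimate on the $2$-convex function $g$, and handle the quadratic piece separately and explicitly rather than trying to push it through Labutin.
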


\begin{remark}
    For $n\leq 3$, the almost everywhere twice differentiability of $2$-convex functions was already proved by \cite{Chaudhuri-Trudinger-05}. 
\end{remark}

\subsection{Weighted H\"older Estimates} The key step in the proof of Proposition \ref{PROP: Alexandrov type thm} is the following H\"{o}lder estimates.

\begin{proposition}\label{PROP: weighted Holder estimate}
    Let $u$ be a 2-convex viscosity solution to $\sigma_{2}(D^2u)=f$ in $B_{R}\subset\bb{R}^n$. Suppose $n\ge 4$ and $f\in L^{\infty}(B_{R})$, then there exist universal constants $\gamma=\gamma(n)\in (0,1)$ and $C=C\left(n,\|f\|_{L^{\infty}(B_{R})}\right)>0$, such that
    \begin{align}\label{eq: weighted gradient estimate}
        \sup_{\substack{x,y\in B_{R}\\ x\neq y}}d_{x,y}^{n+\gamma}\dfrac{|u(x)-u(y)|}{|x-y|^{\gamma}}\leq C\left( \int_{B_{R}}|u|\,\mathrm{d}x+R^{n+2} \right),
    \end{align}
    where $d_{x,y}=\min\{d_{x},d_{y}\}$, and $d_{x}=\mathrm{dist}(x,\partial B_{R})=R-|x|$.
\end{proposition}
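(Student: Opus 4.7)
The plan is to combine Labutin's pointwise Wolff-potential estimate for $k$-Hessian equations \cite{Labutin-Duke-02} with a standard oscillation-iteration argument to produce interior Hölder regularity, and then rescale to subballs whose sizes are the distance to $\partial B_R$ in order to obtain the stated weighted form.

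First I would invoke Labutin's estimate with $k = 2$: for any $B_{2r}(x_0) \subset B_R$,
\[ \mathrm{osc}_{B_r(x_0)} u \leq C \Bigl[ r^{-n} \int_{B_{2r}(x_0)} |u| \, dx + \mathbf{W}^{\mu}_{4/3,3}(x_0, 2r) \Bigr], \]
where $\mathbf{W}^{\mu}_{4/3,3}(x_0, r) := \int_0^r \bigl( \mu(B_t(x_0))/t^{n-4} \bigr)^{1/2} \, dt/t$ and $\mu = \mu_2[u] = f \, dx$ is the Hessian measure. Because $f \in L^\infty$, the potential evaluates to at most $C(n) \|f\|_\infty^{1/2} r^2$ uniformly in $n \geq 4$, even in the borderline dimension $n = 2k = 4$. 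Applying the estimate to translations $u - c$ and combining with the comparison principle for $2$-convex viscosity solutions, one deduces an oscillation-decay inequality
\[ \mathrm{osc}_{B_{r/2}(x_0)} u \leq \theta \, \mathrm{osc}_{B_r(x_0)} u + C \|f\|_\infty^{1/2} r^2, \qquad \theta = \theta(n) \in (0,1). \]
A standard Campanato-type iteration then yields, for some $\gamma = \gamma(n) \in (0,1)$,
\[ \mathrm{osc}_{B_\rho(x_0)} u \leq C (\rho/r)^\gamma \, \mathrm{osc}_{B_r(x_0)} u + C \|f\|_\infty^{1/2} \rho^\gamma r^{2-\gamma}, \qquad 0 < \rho \leq r. \]

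Finally, for arbitrary $x, y \in B_R$, I would set $r = d_{x,y}/2$ so that $B_{2r}(x) \subset B_R$, apply the first step to control $\mathrm{osc}_{B_r(x)} u$ by the $L^1$-mean of $|u|$ over $B_R$, substitute into the Hölder estimate with $\rho = |x-y|$, and multiply through by $d_{x,y}^{n+\gamma}/|x-y|^\gamma$. Using $d_{x,y} \leq R$ to bound the potential contribution by $R^{n+2}$ gives exactly \eqref{eq: weighted gradient estimate}, after absorbing $\|f\|_\infty^{1/2}$ into $C$.

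The main difficulty I anticipate is in the oscillation-decay step. Generic $2$-convex functions in the borderline case $n = 2k = 4$ need not be continuous, so extracting $\theta < 1$ is delicate and relies crucially on using the full $L^\infty$-bound on $f$ rather than a weaker integrability condition to keep the Wolff potential genuinely of order $r^2$. Justifying Labutin's pointwise estimate directly for the viscosity solution also requires the Trudinger-Wang Hessian-measure framework; this should work smoothly here since our viscosity $u$ is continuous and its Hessian measure is absolutely continuous with bounded density $f$, but it must be verified carefully.
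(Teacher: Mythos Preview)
Your overall strategy---Labutin's Wolff-potential estimate for the $2$-Hessian measure, then oscillation decay, iteration, and a weighted rescaling to subballs of radius comparable to $d_{x,y}$---is exactly the route the paper takes. Two steps, however, are not quite as you describe and would need to be reworked.

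First, Labutin's theorem is not an oscillation bound containing an $L^1$ average; it is a pointwise two-sided Harnack-type estimate: for nonnegative $2$-convex $u$ on $B_{3R}$ one has $C_1 W(0,R/8)\le u(0)\le C_2 W(0,2R)+C_3\inf_{B_R}u$. Together with the bound $W\le C(n)\|f\|_\infty^{1/2}r^2$ this gives the Harnack inequality $\sup_{B_r}u\le C_1\inf_{B_r}u+C_2 r^2$ for nonnegative $2$-convex solutions. The paper then applies this to the translates $u-m_{10r}$ and $M_{10r}-u$ and adds, obtaining the contraction factor $\theta=(C_1-1)/(C_1+1)<1$. A bare ``comparison principle'' does not by itself produce $\theta<1$; the Harnack inequality is the mechanism.

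Second, your final step implicitly assumes $|x-y|\le d_{x,y}/2$, since only then does $y$ lie in the ball $B_\rho(x)$, $\rho=|x-y|$, on which your iterated H\"older bound applies. When $|x-y|\gtrsim d_{x,y}$ the paper instead uses the trivial bound $|u(x)-u(y)|\le |u(x)|+|u(y)|$, which introduces the weighted sup-norm $|u|_{0;R}^{(n)}=\sup_x d_x^{\,n}|u(x)|$ on the right-hand side. This is then converted to $\int_{B_R}|u|$ via the interpolation inequality $|u|_{0;R}^{(n)}\le \varepsilon^\gamma [u]_{0,\gamma;R}^{(n)}+C(n)\varepsilon^{-n}\int_{B_R}|u|$. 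You will need both this case split and the interpolation step to arrive at the $L^1$ form of the estimate.
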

 
 We adopt notations consistent with \cite[P585]{Trudinger-Wang-Annals-99}. For $\gamma\in(0,1]$, denote
    \begin{align*}
        |u|_{0;R}^{(n)}&=\sup_{x\in B_{R}}d_{x}^{n}|u(x)|,\\
        [u]_{0,\gamma;R}^{(n)}&= \sup_{\substack{x,y\in B_{R}\\ x\neq y}}d_{x,y}^{n+\gamma}\dfrac{|u(x)-u(y)|}{|x-y|^{\gamma}}.
    \end{align*}

There is an interpolation inequality between these two norms, see \cite[Lemma 2.6, P585]{Trudinger-Wang-Annals-99}.
\begin{lemma}\label{LEMMA: interpolation ineq}
    For any $\varepsilon>0$ and $u\in C^{0}(B_{R})\cap L^{1}(B_{R})$, there holds
    \begin{align}\label{eq: interpolation ineq}
        |u|_{0;R}^{(n)}\leq \varepsilon^{\gamma}[u]_{0,\gamma;R}^{(n)}+C(n)\varepsilon^{-n}\int_{B_{R}}|u|.
    \end{align}
\end{lemma}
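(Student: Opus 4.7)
The plan is a standard weighted interpolation: I would compare the value of $u$ at a near-supremum point $x_0$ against its $L^1$ average on a concentric ball whose radius is proportional to the distance function $d_{x_0}$. The weighted norms on the two sides of the inequality already suggest this scaling, since a shift of the evaluation point by $\rho$ changes $d$ only by a multiplicative factor when $\rho\asymp d_{x_0}$.

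First I would fix $x_0\in B_R$ and $\rho\in(0,d_{x_0})$. For any $y\in B_\rho(x_0)$ the triangle inequality yields $d_y\geq d_{x_0}-\rho$, so $d_{x_0,y}\geq d_{x_0}-\rho$. Plugging this into the definition of $[u]_{0,\gamma;R}^{(n)}$ and using $|x_0-y|\leq\rho$ gives the pointwise bound
\[
|u(x_0)-u(y)|\leq (d_{x_0}-\rho)^{-(n+\gamma)}\rho^{\gamma}\,[u]_{0,\gamma;R}^{(n)}.
\]
Next I would average $|u(x_0)|\leq|u(y)|+|u(x_0)-u(y)|$ over $y\in B_\rho(x_0)$, estimate the local mean of $|u|$ by $C(n)\rho^{-n}\int_{B_R}|u|$, multiply by $d_{x_0}^n$, and parametrize $\rho=s\,d_{x_0}$ with $s\in(0,1)$ to get
\[
d_{x_0}^n|u(x_0)|\leq \frac{C(n)}{s^n}\int_{B_R}|u|+\frac{s^\gamma}{(1-s)^{n+\gamma}}\,[u]_{0,\gamma;R}^{(n)}.
\]
Taking the supremum in $x_0$ reproduces the same inequality with $|u|_{0;R}^{(n)}$ on the left.

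To cast the result into the advertised form I would restrict to $s\leq 1/2$ so that $(1-s)^{-(n+\gamma)}$ is controlled by a purely dimensional constant, and then set $s=c(n)\,\varepsilon$ with $c(n)=2^{-(n+\gamma)/\gamma}$ so that the coefficient of $[u]_{0,\gamma;R}^{(n)}$ collapses to exactly $\varepsilon^\gamma$; the prefactor on the $L^1$ integral is then of the form $C(n)\varepsilon^{-n}$, as required. There is no genuine obstacle here: the only minor bookkeeping step is absorbing the geometric factor $(1-s)^{-(n+\gamma)}$ into a rescaling of $s$ in order to remove any constant in front of $\varepsilon^\gamma$. The interesting regime is $\varepsilon$ small, which is precisely how the lemma is used in Proposition \ref{PROP: weighted Holder estimate}.
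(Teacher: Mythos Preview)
Your argument is correct and is the standard proof of this weighted interpolation inequality. The paper itself does not supply a proof: it simply quotes the result from Trudinger--Wang \cite[Lemma~2.6, p.~585]{Trudinger-Wang-Annals-99}, so there is nothing to compare on the level of technique. One small remark: your choice $s=c(n)\varepsilon$ forces $\varepsilon$ to lie in a bounded interval $(0,\varepsilon_0(n,\gamma)]$, and indeed the inequality as stated cannot hold for arbitrarily large $\varepsilon$ (take $u$ constant so that $[u]_{0,\gamma;R}^{(n)}=0$). This is harmless, since the lemma is only invoked in Proposition~\ref{PROP: weighted Holder estimate} with a single small value $\varepsilon^\gamma=1/(2C)$, exactly as you note.
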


We now introduce the Wolff potential for 2-convex functions. Indeed, it can be defined for $k$-convex functions with $k\leq n/2$.

\begin{definition}[Wolff Potential]
    Let $n\ge 4$, and let $u$ be a 2-convex function on a domain $\Omega\subset\bb{R}^n$. For any $B_{r}(x)\subset \Omega$, define the Wolff potential of $u$ by
    \[ W(x,r)=\int_{0}^{r}\left( \dfrac{\mu(B_{t}(x))}{t^{n-4}} \right)^{\frac{1}{2}}\dfrac{\mathrm{d}t}{t}, \]
    where $\mu=\sigma_{2}(D^2u)$ is the 2-Hessian measure of $u$ which is introduced by Trudinger-Wang \cite{Trudinger-Wang-Annals-99}.
\end{definition}

In \cite[Theorem 2.1, P6]{Labutin-Duke-02}, Labutin proved the following estimates of 2-convex functions in terms of the Wolff potential. It should be mentioned that Labutin's result holds for $k$-convex functions with $0<k\leq n/2$. However, we specifically focus on and present the case of $k=2$ and $n\ge 4$ in this work. 

\begin{lemma}\label{LEMMA: est in terms of Wolff potential}
    Let $u$ be a nonnegative 2-convex functions on $B_{3R}\subset\bb{R}^n$ with $n\ge 4$. Then 
    \[ C_{1}W\left(0,\dfrac{R}{8}\right)\leq u(0)\leq C_{2}W(0,2R)+C_{3}\inf_{B_{R}}u, \]
    where $C_{1},C_{2}$ and $C_{3}$ depend only on $n$.
\end{lemma}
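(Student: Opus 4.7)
The plan is to reproduce Labutin's argument from \cite{Labutin-Duke-02}: decompose by dyadic annuli $A_j = B_{r_j}\setminus B_{r_{j+1}}$ (with $r_j = 2^{-j}$ times an appropriate anchor radius), let $\mu = \sigma_2(D^2u)$ be the 2-Hessian measure of Trudinger--Wang \cite{Trudinger-Wang-Annals-99}, and compare $u$ against explicit radial barriers whose $\sigma_2$-mass matches $\mu_j := \mu(A_j)$ on each shell, so the one-scale bounds reassemble into the Wolff integral. The radial building block comes from the identity
\[\sigma_2(D^2 v) = (n-1)\phi''(r)\phi'(r)/r + \binom{n-1}{2}(\phi'(r)/r)^2\]
for $v(x) = \phi(|x|)$: the Ansatz $\phi(r) = A r^{-(n-4)/2}$ (replaced by $A\log(1/r)$ when $n=4$) is $\sigma_2$-harmonic on $r>0$, and gluing two such profiles across a shell at radius $r_j$ with a jump in $\phi'$ produces a radial 2-convex $V_j$ whose $\sigma_2$-mass is concentrated on the shell with any prescribed total mass $\mu_j$, satisfying
\[V_j(0)\;\asymp\;(\mu_j/r_j^{n-4})^{1/2}.\]

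\textbf{Upper bound.} Set $m = \inf_{B_R} u$ and write $\tilde u = u - m$. Cover $B_{2R}$ by dyadic annuli with $r_j = 2R\cdot 2^{-j}$, rescale each $V_j$ to vanish on $\partial B_{2R}$, and assemble the resulting pieces into a single 2-convex radial function $\bar v$ on $B_{2R}$. The assembly exploits concavity of $\sigma_2^{1/2}$ on the 2-convex cone to ensure $\sigma_2(D^2 \bar v)\ge \mu$ in the sense of Hessian measures. Trudinger--Wang's comparison principle gives $\tilde u \le \bar v$ in $B_{2R}$, up to a boundary correction controlled by $m$, so
\[u(0) \;\le\; \bar v(0) + C_3 m \;\lesssim\; \sum_j (\mu_j / r_j^{n-4})^{1/2} + C_3 \inf_{B_R}u \;\lesssim\; C_2 W(0, 2R) + C_3 \inf_{B_R}u,\]
since the dyadic sum is comparable to the continuous integral defining $W$.

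\textbf{Lower bound.} For each dyadic annulus $A_j \subset B_{R/8}$, use the same radial template to construct a radial 2-convex subsolution $w_j$ supported in $B_{R/4}$ with $\sigma_2(D^2 w_j) \le \mu|_{A_j}$, $w_j \le 0$ on $\partial B_{R/4}$, and $w_j(0) \ge c(\mu_j/r_j^{n-4})^{1/2}$. Since $u \ge 0 \ge w_j$ on $\partial B_{R/4}$, the comparison principle gives $u \ge w_j$ in $B_{R/4}$. To sum across scales while respecting nonlinearity, combine the $w_j$ into a single 2-convex subsolution (again using concavity of $\sigma_2^{1/2}$), yielding
\[u(0)\;\ge\; c\sum_j (\mu_j/r_j^{n-4})^{1/2} \;\ge\; C_1 W(0, R/8).\]

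\textbf{Main obstacle.} The principal difficulty is the nonlinearity of $\sigma_2$: radial barriers cannot be superposed additively, so every gluing and summation step must take place inside the 2-convex cone. The key tools are concavity of $\sigma_2^{1/2}$ on this cone together with the weak continuity and comparison properties of the 2-Hessian measure from \cite{Trudinger-Wang-Annals-99}. Matching the constants in the $A r^{-(n-4)/2}$ Ansatz so that the dyadic series reassembles precisely into $W$, and in particular handling the borderline $n=4$ case where the fundamental radial profile is logarithmic, is the delicate core calculation carried out in \cite{Labutin-Duke-02}.
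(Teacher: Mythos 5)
The paper does not actually prove this lemma: it is quoted verbatim (specialized to $k=2$, $n\ge 4$) from Labutin \cite[Theorem 2.1]{Labutin-Duke-02}, so the real question is whether your reconstruction of Labutin's argument is sound. Your skeleton is the right one — dyadic scales, the radial identity for $\sigma_2(D^2\phi(|x|))$, the $\sigma_2$-harmonic exponent $r^{-(n-4)/2}$ (logarithmic at $n=4$), Hessian measures and the Trudinger--Wang comparison principle, and the one-scale magnitude $(\mu_j/r_j^{n-4})^{1/2}$ — but the orientation of the whole construction is inverted, and this is not cosmetic.

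Concretely: with $A>0$ the profiles $Ar^{-(n-4)/2}$ and $A\log(1/r)$ are not $2$-convex — a direct computation gives $\Delta\bigl(Ar^{-(n-4)/2}\bigr)=-\tfrac{n}{2}\,\tfrac{n-4}{2}A\,r^{-n/2}<0$, and a $2$-convex function is subharmonic, hence can never blow up to $+\infty$ at an interior point; the correct fundamental profile is the negative one, $-Ar^{-(n-4)/2}$ (resp.\ $A\log r$). Consequently the barriers you describe — radial $2$-convex $V_j$, $w_j$ vanishing (or $\le 0$) on the boundary of a ball yet large and positive at the center — contradict the maximum principle and do not exist. The comparison principle is also applied backwards in both halves: for $2$-convex functions, $\sigma_2[v]\ge\sigma_2[w]$ as measures together with $v\le w$ on the boundary forces $v\le w$ inside (more Hessian mass pushes the function \emph{down}), so ``$\sigma_2[\bar v]\ge\mu$ implies $\tilde u\le\bar v$'' and ``$\sigma_2[w_j]\le\mu$, $w_j\le u$ on $\partial B_{R/4}$ implies $w_j\le u$ inside'' are both invalid as stated. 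In fact the inequality in the orientation your barriers presuppose — the Wolff potential of the Hessian measure of a nonnegative $2$-convex $u$ sitting below $u(0)$ — already fails for $u=|x|^2/2$ at the origin; Labutin's theorem is the Hessian analogue of the Kilpel\"ainen--Mal\'y estimate for supersolutions, proved for the opposite orientation (one estimates $-u$ for nonpositive $2$-convex $u$, the barriers are truncations of the negative profile, and the one-scale dips are accumulated by an iteration across scales rather than by superposing positive subsolutions). Finally, even setting the sign issues aside, your last paragraph defers the core gluing/iteration computation back to \cite{Labutin-Duke-02}, so what you have is a sketch plus a citation rather than a proof — which, to be fair, is also all the paper itself provides.
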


From this estimate, we derive the Harnack-type inequality in the following form.
\begin{lemma}[Harnack estimates]\label{LEMMA: Harnack estimate}
    Let $u$ be a nonnegative 2-convex solution to $\sigma_{2}(D^2)=f$ on $B_{R}\subset \bb{R}^n$. Suppose $n\ge 4$ and $f\in L^{\infty}(B_{R})$, then for any $0<r<R/10$, we have
    \begin{align}\label{eq: Harnack estimate}
        \sup_{B_{r}}u\leq C_{1}\inf_{B_{r}}u+C_{2}r^2,
    \end{align}
    where $C_{1}$ depends only on $n$ and $C_{2}$ depends on $n$ and $\|f\|_{L^{\infty}(B_{R})}$.
\end{lemma}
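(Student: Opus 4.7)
\emph{Plan.} I would deduce the Harnack estimate directly from Labutin's Wolff-potential bound (the previous lemma) by applying it at every interior point $x_0 \in B_r$, and by controlling the potential $W(x_0, \cdot)$ by means of the pointwise bound on $f$.

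\emph{Step 1: apply Labutin's estimate with a carefully chosen inner radius.} Fix an arbitrary $x_0 \in B_r$ and set the inner radius to $\tilde R := 2r$. Since $0 < r < R/10$, any $y \in B_{3\tilde R}(x_0) = B_{6r}(x_0)$ satisfies $|y| \leq |y - x_0| + |x_0| < 7r < R$, so $B_{3\tilde R}(x_0) \subset B_R$. The previous lemma therefore applies to $u$ on this ball and yields
\[ u(x_0) \leq C_2 W(x_0, 2\tilde R) + C_3 \inf_{B_{\tilde R}(x_0)} u = C_2 W(x_0, 4r) + C_3 \inf_{B_{2r}(x_0)} u. \]

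\emph{Step 2: control $W(x_0, 4r)$.} Under the Trudinger-Wang framework cited earlier, the 2-Hessian measure of the viscosity solution $u$ is the absolutely continuous measure $f\,dx$, so for every $t > 0$,
\[ \mu(B_t(x_0)) = \int_{B_t(x_0)} f\,dx \leq \omega_n \|f\|_{L^\infty(B_R)}\, t^n. \]
Substituting this into the definition of $W$ gives
\[ W(x_0, 4r) \leq \int_0^{4r}\bigl(\omega_n \|f\|_{L^\infty(B_R)}\bigr)^{1/2} t^2 \cdot \frac{dt}{t} = 8\bigl(\omega_n \|f\|_{L^\infty(B_R)}\bigr)^{1/2} r^2. \]

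\emph{Step 3: compare $B_{2r}(x_0)$ with $B_r$ and conclude.} Since $x_0 \in B_r$, any $y \in B_r$ satisfies $|y - x_0| \leq |y| + |x_0| < 2r$, so $B_r \subset B_{2r}(x_0)$ and therefore $\inf_{B_{2r}(x_0)} u \leq \inf_{B_r} u$. Combining this with Steps 1 and 2 gives
\[ u(x_0) \leq C_3 \inf_{B_r} u + C\bigl(n, \|f\|_{L^\infty(B_R)}\bigr)\, r^2. \]
Since $x_0 \in B_r$ was arbitrary and the viscosity solution $u$ is continuous (so the supremum on $B_r$ is realized by a sequence, or upper semicontinuity suffices), passing to the supremum yields the required inequality with $C_1 = C_3$ depending only on $n$ and $C_2$ depending on $n$ and $\|f\|_{L^\infty(B_R)}$.

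\emph{Expected main obstacle.} The argument is essentially routine once the previous lemma is in hand; the only subtle point is to justify that the 2-Hessian measure of the viscosity solution $u$ equals $f\,dx$, i.e.~that $\mu(B_t(x_0)) \leq \omega_n \|f\|_{L^\infty} t^n$. For smooth solutions this is immediate, and for viscosity solutions it follows from the Trudinger-Wang definition of the $k$-Hessian measure together with the hypothesis that $\sigma_2(D^2u) = f$ holds in the viscosity sense; no new ingredient is needed.
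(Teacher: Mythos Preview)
Your proposal is correct and follows essentially the same route as the paper: apply Labutin's Wolff-potential estimate on $B_{6r}(x_0)$ for each $x_0\in B_r$, bound $W(x_0,4r)$ using $\mu=f\,\mathrm{d}x$ and $\mu(B_t)\leq C(n)\|f\|_{L^\infty}t^n$, and use $B_r\subset B_{2r}(x_0)$ to replace the infimum. Your write-up is in fact slightly more careful than the paper's in checking the containment $B_{6r}(x_0)\subset B_R$ and in computing the potential integral explicitly.
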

\begin{proof}
    For any $x\in B_{r}$, applying Lemma \ref{LEMMA: est in terms of Wolff potential} on $B_{6r}(x)$, we obtain
    \begin{align}\label{eq: 4.4}
        u(x)\leq C(n)\left(W(x,4r)+\inf_{B_{2r}(x)}u\right)\leq C(n)\left( W(x,4r)+\inf_{B_{r}}u \right).
    \end{align}
    To estimate the Wolff potential, note that the 2-Hessian measure of $u$ is $\mu=f\,\mathrm{d}x$ now. This yields $\mu(B_{t}(x))\leq C(n)\|f\|_{L^\infty}t^n$. Therefore, 
    \begin{align}\label{eq: est on W(x,4r)}
        W(x,4r)\leq\int_{0}^{4r}\left(\dfrac{\mu(B_{t}(x))}{t^{n-4}}\right)^{\frac{1}{2}}\dfrac{\mathrm{d}t}{t}\leq C(n)\|f\|_{L^{\infty}}^{1/2}r^2.
    \end{align}
    Combining \eqref{eq: 4.4} and \eqref{eq: est on W(x,4r)}, the result follows.
\end{proof}

Now we are ready to prove Proposition \ref{PROP: weighted Holder estimate}.

\vspace{0.2cm}

\noindent\emph{Proof of Proposition \ref{PROP: weighted Holder estimate}}.  We fix any $x\in B_{R}$ and let $d=d_{x}=R-|x|$. For any $0<r<d$, denote the oscillation of $u$ over $B_{r}(x)$ by
\[ \omega_{r}:=\mathrm{osc}_{B_{r}(x)}u=\sup_{B_{r}(x)}u-\inf_{B_{r}(x)}u. \]

\emph{Step 1. Oscillation Decay.} We first prove the oscillation decay property: 
\begin{align}\label{eq: oscillation decay}
     \omega_{r}\leq \theta \omega_{10r}+ Cr^2, \qquad \text{for any}\  0<r<\frac{d}{10}, 
\end{align}
where $\theta=\theta(n)\in (0,1)$ and $C=C\left(n,\|f\|_{L^{\infty}}\right)$.

This follows from the Harnack-type inequality easily. Let $M_{r}=\sup_{B_{r}(x)}u$ and $m_{r}=\inf_{B_{r}(x)}u$. Applying the Harnack-type inequality for $M_{10r}-u$ and $u-m_{10r}$ on $B_{10r}(x)$, we derive
\begin{align*}
    M_{10r}-m_{r}&\leq C_{1}(M_{10r}-M_{r})+C_{2}r^2,\\
    M_{r}-m_{10r}&\leq C_{1}(m_{r}-m_{10r})+C_{2}r^2.
\end{align*}
Adding these two inequalities yields \eqref{eq: oscillation decay} for $\theta=(C_1-1)/(C_{1}+1)\in (0,1)$.

\emph{Step 2. Iteration argument.} For any $0<r<d/10$, choose a positive integer $k$ such that $d/10^{k+1}<r\leq d/10^k$. Set $r_{i}=d/10^{i}$ for $i=1,2,\cdots,k$. Iterating the oscillation decay property \eqref{eq: oscillation decay}, 
\begin{align*}
    \omega_{r}\leq \omega_{r_{k}}&\leq \theta \omega_{r_{k-1}}+Cr_{k}^2\\
    &\leq \theta(\theta \omega_{r_{k-2}}+Cr_{k-1}^2 )+Cr_{k}^2\\
    &\leq \cdots\leq \theta^{k-1}\omega_{r_{1}}+C\sum_{i=0}^{k-2}\theta^{i}r_{k-i}^2. 
\end{align*}
Thus, there exists $\gamma=\gamma(n)\in (0,1)$, such that
\[ \omega_{r}\leq C\left(\dfrac{r}{d}\right)^{\gamma}\omega_{\frac{d}{10}}+Cd^2,\qquad \text{for any}\ 0<r<\frac{d}{10}. \]
By a well-known iteration lemma of monotone functions, see \cite[Chapter 6]{GT-book} and \cite[Chapter 3]{HanLin-Book}, we can improve the above estimate to
\begin{align}\label{eq: improved oscillation decay}
    \omega_{r}\leq C\left(\dfrac{\omega_{d/10}}{d^{\gamma}}r^{\gamma}+r^2\right),\qquad \text{for any}\ 0<r<\frac{d}{10}.
\end{align}

\emph{Step 3. Estimate the difference quotient.} For any $y\in B_{R}$, consider two cases:

\emph{Case 1. $r=|x-y|<d/20$.} By \eqref{eq: improved oscillation decay}, we obtain 
\begin{align*}
    d_{x,y}^{n+\gamma}\dfrac{|u(x)-u(y)|}{|x-y|^{\gamma}}&\leq d^{n+\gamma}\dfrac{\omega_{r}}{r^{\gamma}}\leq \dfrac{d^{n+\gamma}}{r^\gamma}\cdot C\left(\dfrac{\omega_{d/10}}{d^\gamma}r^{\gamma}+r^2\right)\\
    &\leq Cd^n \sup_{B_{d/10}(x)}|u|+Cd^{n+\gamma}r^{2-\gamma}\leq C|u|_{0;R}^{(n)}+CR^{n+2}.
\end{align*}

\emph{Case 2. $|x-y|\ge d/20$.}
\begin{align*}
    d_{x,y}^{n+\gamma}\dfrac{|u(x)-u(y)|}{|x-y|^{\gamma}}&\leq Cd_{x,y}^{n+\gamma}\dfrac{|u(x)|+|u(y)|}{d^{\gamma}}\leq C(d_{x}^n|u(x)|+d_{y}^n|u(y)|)\leq C|u|_{0;R}^{(n)}.
\end{align*}
Combining these two cases, we obtain
\[ [u]_{0,\gamma;R}^{(n)}\leq C\left(|u|_{0;R}^{(n)}+R^{n
+2}\right). \]
Finally, applying the interpolation inequality \eqref{eq: interpolation ineq} with $\varepsilon^{\gamma}=1/2C$, we conclude that
\[ [u]_{0,\gamma;R}^{(n)}\leq C\left(\int_{B_{R}}|u|+R^{n
+2}\right), \]
where $C$ depends only on $n$ and $\|f\|_{L^{\infty}(B_{R})}$.\hfill\qed

\subsection{Proof of Proposition \ref{PROP: Alexandrov type thm}} We follow the arguments in \cite{Evans-Gariepy, Chaudhuri-Trudinger-05} to prove Proposition \ref{PROP: Alexandrov type thm}.

\vspace{0.2cm}

\noindent\emph{Proof of Proposition \ref{PROP: Alexandrov type thm}.} \emph{Step 1. Hessian as Radon measures.}   For 2-convex function $u$, Chaudhuri-Trudinger \cite[Theorem 2.4]{Chaudhuri-Trudinger-05} proved in the distribution sense that the Hessian of $u$ can be interpreted as a matrix-valued Radon measure $[D^2u]=[\mu^{ij}]$ with $\mu^{ij}=\mu^{ji}$:
\[ \int u\varphi_{ij}=\int \varphi \,\mathrm{d}\mu^{ij}\qquad \mathrm{for\ any}\ \varphi\in C_{c}^{\infty}(B_{1}). \]

By Lebesgue-Radon-Nikodym decomposition, we write  $\mu^{ij}=u^{ij}\,\mathrm{d}x+\mu^{ij}_{s}$, where $\mathrm{d}x$ denotes the $n$-dimensional Lebesgue measure, $u^{ij}\in L^{1}_{\mathrm{loc}}$ denotes the absolutely continuous part with respect to $\mathrm{d}x$, and $\mu^{ij}_{s}$ denotes the singular part. Write $[D^2u]=D^2u\, \mathrm{d}x+[D^2u]_{s}$, where $D^2u=(u^{ij})$ and $[D^2u]_{s}=[\mu^{ij}_{s}]$. For almost every $x\in B_{1}$, there hold
\begin{align}
    &\lim_{r\to 0}\fint_{B_{r}(x)}|D^2u(y)-D^2u(x)|\,\mathrm{d}y=0, \label{eq: 6.1} \\
    &\lim_{r\to 0}\dfrac{1}{r^n}\|[D^2u]_{s}\|(B_{r}(x))=0. \label{eq: 6.2}
\end{align}
Here $\|[D^2u]_{s}\|$ denotes the total variation of $[D^2u]_{s}$. Since we assumed that $u$ is Lipschitz,  Rademacher's theorem tells us that $u$ is differentiable almost everywhere in $B_{1}$. In particular, for almost every $x\in B_{1}$, we also know that
\begin{equation}\label{eq: 6.3}
    \lim_{r\to 0}\fint_{B_{r}(x)}|Du(y)-Du(x)|\,\mathrm{d}y=0.
\end{equation}

Fix any $x$ such that \eqref{eq: 6.1} \eqref{eq: 6.2} and \eqref{eq: 6.3} hold, we will show that $h(y)=o(|y-x|^2)$ as $y\to x$, where
\[ h(y)=u(y)-u(x)-Du(x)\cdot(y-x)-\dfrac{1}{2}(y-x)^{T}D^2u(x)(y-x). \]

\emph{Step 2. Approximation in $L^1$ sense.} Following verbatim Steps 2–4 in the proof of Theorem 6.9 from \cite[P274-275]{Evans-Gariepy}, we can conclude that
\[ \fint_{B_{r}(x)}|h(y)|\,\mathrm{d}y=o(r^2),\qquad \mathrm{as}\ r\to0. \]

\emph{Step 3. H\"older estimate.} Fix any $r$ satisfying $0<2r<1-|x|$. Consider $g(y)=u(y)-u(x)-Du(x)\cdot(y-x)$. It is a 2-convex viscosity solution to the sigma-2 equation $\sigma_{2}(D^2g)=f$ on $B_{2r}(x)$. By Proposition \ref{PROP: weighted Holder estimate}, we have
\begin{equation}
    \begin{split}
        r^{n+\gamma}\sup_{\substack{y,z\in B_{r}(x)\\ y\neq z}}\dfrac{|g(y)-g(z)|}{|y-z|^{\gamma}}&\leq \sup_{\substack{y,z\in B_{2r}(x)\\ y\neq z}}d_{y,z}^{n+\gamma}\dfrac{|g(y)-g(z)|}{|y-z|^{\gamma}}\\
        &\leq C\left(\int_{B_{2r}(x)}|g(y)|\,\mathrm{d}y+r^{n+2}\right)\\
        &\leq C\int_{B_{2r}(x)}|h(y)|\,\mathrm{d}y+ Cr^{n+2}
    \end{split}
\end{equation}
where $C$ depends on $n, \|f\|_{L^{\infty}(B_{1})}$ and $|D^2u(x)|$. We also notice that
\[ (y-x)^{T}D^2u(x)(y-x)-(z-x)^{T}D^2u(x)(z-x)=(y+z-2x)^{T}D^2u(x)(y-z). \]
Therefore, we obtain the following H\"older estimate:
\begin{equation}\label{eq:6.5}
    \begin{split}
        \sup_{\substack{y,z\in B_{r}(x)\\ y\neq z}}\dfrac{|h(y)-h(z)|}{|y-z|^{\gamma}}&\leq \sup_{\substack{y,z\in B_{r}(x)\\ y\neq z}}\dfrac{|g(y)-g(z)|}{|y-z|^{\gamma}}+Cr^{2-\gamma}\\
        &\leq \dfrac{C}{r^{\gamma}}\fint_{B_{2r}(x)}|h(y)|\,\mathrm{d}y+Cr^{2-\gamma}.
    \end{split}
\end{equation}

\emph{Step 4. Improve $L^{1}$ approximation to $L^{\infty}$.} For any small $\varepsilon>0$, we will find $r_{0}$ small, such that
\[ \dfrac{1}{r^2}\sup_{B_{r/2}(x)}|h|\leq 2\varepsilon \qquad \mathrm{for}\ r<r_{0}.   \]
Take $0<\eta<1/2$ small to be fixed later. By Step 2, we have
\begin{equation}\label{eq: 6.6}
    \mathcal{L}^{n}\left(\{z\in B_{r}(x): |h(z)|\ge\varepsilon r^2\}\right)\leq \dfrac{1}{\varepsilon r^2}\fint_{B_{r}(x)}|h(z)|\,\mathrm{d}z\leq \dfrac{o(r^{2})}{\varepsilon r^2}<\dfrac{1}{2}\eta^n \mathcal{L}^n(B_{r}),
\end{equation}
provided $r<r_{0}=r_{0}(n,\eta,\varepsilon)$ small. We claim that for any $y\in B_{r/2}(x)$, there exists $z\in B_{\eta r}(y) $ such that $|h(z)|<\varepsilon r^2$. Otherwise, we have $B_{\eta r}(y)\subset \{|h|\ge \varepsilon r^2\}\cap B_{r}(x)$, this implies
\[ \eta^n\mathcal{L}^{n}(B_{r})\leq \mathcal{L}^{n}\left(\{z\in B_{r}(x): |h(z)|\ge\varepsilon r^2\}\right)<\dfrac{1}{2}\eta^n \mathcal{L}^{n}(B_{r}). \]
It contradicts with \eqref{eq: 6.6}.  Therefore,
\begin{align*}
    |h(y)|\leq |h(z)|+(\eta r)^{\gamma} \dfrac{|h(y)-h(z)|}{|y-z|^{\gamma}}&\leq \varepsilon r^2+(\eta r)^{\gamma}\left(\dfrac{C}{r^{\gamma}}\fint_{B_{2r}(x)}|h|+Cr^{2-\gamma}\right)\\
    &\leq \varepsilon r^2+\eta^{\gamma} o(r^2)+ C\eta^{\gamma} r^2. 
\end{align*}
We choose $\eta^{\gamma}=\varepsilon/2C$ and reduce $r_{0}$ if needed, then we finally conclude that
\begin{align*}
    |h(y)|\leq  2\varepsilon r^2\qquad \mathrm{for}\ r<r_{0}=r_{0}(n,\varepsilon).
\end{align*}
Since $y\in B_{r/2}(x)$ is arbitrary, we finish the proof.\hfill\qed

\section{A Generalization of Savin's Small Perturbation Theorem}\label{Section: Genaralized Savin's thm}

In \cite{lian-zhang} and \cite{Fan}, we can generalize Savin's small perturbation theorem \cite{Savin-07} to fully nonlinear elliptic equations with non-homogeneous terms. Consider the fully nonlinear equation of general form: 
 \[ F(D^2u,Du,u,x)=f(x)\quad \mathrm{on}\ B_{1}, \]
    where $F:\mathcal{S}^{n}\times \bb{R}^n\times\bb{R}\times B_{1}\to\bb{R}$ be a function defined for pairs $(M,p,z,x)$ satisfying the following hypotheses:

    $\mathrm{H}1)$ $F(\cdot,p,z,x)$ is elliptic, i.e. 
        \[ F(M+N,p,z,x)\ge F(M,p,z,x) \quad \text{for}\ M, N\in\mathcal{S}^{n}\ \text{and}\ N\ge0. \]

    $\mathrm{H}2)$ $F(\cdot,p,z,x)$ is uniformly elliptic in a $\rho\,$-neighborhood of the origin in $\mathcal{S}^n$. That is, there exist constants $\Lambda>\lambda>0$, such that for any $\|M\|, \|N\|, |p|, |z|\leq \rho$ with $N\ge0$,
        \[ \Lambda\|N\|\ge F(M+N,p,z,x)-F(M,p,z,x)\ge \lambda\|N\|. \]

    $\mathrm{H}3)$ $0$ is a solution, i.e. $F(0,0,0,x)\equiv 0$. Moreover, $F$ satisfies the structure condition: for any $\|M\|, |p|,|q|,|z|,|s|\leq \rho$ and $x\in B_{1}$, there holds
        \[  |F(M,p,z,x)-F(M,q,s,x)|\leq b_{0}|p-q|+c_{0}|z-s|.  \]

    $\mathrm{H}4)$ $F\in C^{1}$ and its derivatives $D_{M}F$ is uniformly continuous in the $\rho\,$-neighborhood of $\{(0,0,0,x):x\in B_{1}\}$ with modulus of continuity $\omega_{F}$. 
    
    The generalized small perturbation theorem is stated as follows 
    \begin{theorem}\label{THM: Generalized Savin's thm}
        Let $F$ satisfy $\rm{H}1)\sim\rm{H}4)$, and let $u\in C(B_{1})$ be a viscosity solution to 
        \[ F(D^2u, Du, u, x)=f(x)\qquad \mathrm{on}\ B_{1}. \]
        For any $\alpha\in (0,1)$, there exist constants $\delta,C>0$, depending only on $n,\alpha, \rho,\lambda,\Lambda$, $b_{0}$, $c_{0}$ and $\omega_{F}$, such that if
        \[ |F(M,p,z,x)-F(M,p,z,x')|\leq \delta |x-x'|^{\alpha}\quad \text{for all}\ \|M\|,|p|,|z|\leq \rho\ \text{and}\ x,x'\in B_1, \]
        and        
        \[ \|u\|_{L^{\infty}(B_{1})}\leq \delta, \qquad \|f\|_{C^{0,\alpha}(B_{1})}\leq \delta, \]
         then $u\in C^{2,\alpha}(B_{1/2})$ with 
        \[\|u\|_{C^{2,\alpha}(B_{1/2})}\leq C.\]
    \end{theorem}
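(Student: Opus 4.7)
The plan is to follow Savin's compactness-and-iteration strategy, adapted to handle (i) the non-zero right-hand side $f(x)$, and (ii) the dependence of $F$ on the lower-order arguments $(p,z)$. The end goal is a Campanato-type quadratic approximation of $u$ at every $x_0 \in B_{1/2}$ with error $O(r^{2+\alpha})$, which then yields the pointwise $C^{2,\alpha}$ bound.

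First I would linearize $F$ at $(0,0,0,0)$, obtaining a constant-coefficient operator $L_0 = a_{ij}^0 \partial_{ij} + b_i^0 \partial_i + c^0$ which is uniformly elliptic by H2), with $|b_i^0| \leq b_0$ and $|c^0| \leq c_0$ from H3). The core ingredient is a compactness/approximation lemma: if $\|u\|_{L^\infty}$, $\|f\|_{C^{0,\alpha}}$, and the $x$-modulus of $F$ are all below some small $\delta_0$, then $u$ is quadratically close at the origin to a polynomial $P$ with $P(0)=0$, $DP(0)=0$ and $L_0 P = f(0)$. This lemma is proved by contradiction: a failing sequence $(F_k, f_k, u_k)$ with smallness parameters $\delta_k \to 0$ yields, via Krylov-Safonov $C^\alpha$ estimates together with H3) and H4), a locally uniform limit $u_\infty$ that is a viscosity (hence classical) solution of a constant-coefficient linear equation with $C^{0,\alpha}$ right-hand side; classical Schauder regularity of $u_\infty$ then contradicts the assumed blow-up. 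Stability of viscosity solutions under uniform convergence of both equations and solutions is what drives this passage to the limit.

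The quantitative $C^{2,\alpha}$ estimate is then obtained by geometric iteration: at scales $r_k = \theta^k$, one constructs quadratic polynomials $P_k$ with $\|u - P_k\|_{L^\infty(B_{r_k})} \leq r_k^{2+\alpha}$, each step being the approximation lemma applied to the rescaled defect $w_k(y) = (u-P_k)(r_k y)/r_k^{2+\alpha}$. The main obstacle I anticipate is verifying that after this rescaling, the equation for $w_k$ still satisfies the hypotheses H1)--H4) with smallness parameter $\leq \delta_0$. The $C^{0,\alpha}$-controlled quantities (the right-hand side and the $x$-modulus of $F$) scale favorably by $r_k^\alpha$; however, the lower-order structure constants $b_0, c_0$ and the Hessian modulus $\omega_F$ do not improve intrinsically under scaling. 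These contributions must be absorbed by exploiting that $w_k$ is itself a small perturbation at scale $r_k^\alpha$, so the lower-order terms multiplied by $D w_k, w_k$ can be folded into a $C^{0,\alpha}$-small right-hand side, while the uniform modulus $\omega_F$ enters only as a quantitative continuity assumption near the linearization point. Once this bookkeeping is in place, the inductive step closes and yields the geometric $r_k^{2+\alpha}$ decay; the conclusion at the origin then propagates to every point of $B_{1/2}$ by translation, producing the stated uniform interior $C^{2,\alpha}$ estimate with the claimed dependence of $\delta$ and $C$.
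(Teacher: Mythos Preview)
The paper does not actually prove this theorem: Section~\ref{Section: Genaralized Savin's thm} merely states it and attributes the proof to the external references \cite{lian-zhang} and \cite{Fan}. There is therefore no in-paper argument to compare your proposal against.

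That said, your outline is the standard Savin compactness-and-iteration scheme adapted to lower-order terms and non-homogeneous right-hand side, which is precisely the method those references employ. The main technical point you flag---that under the quadratic rescaling $w_k(y)=(u-P_k)(r_k y)/r_k^{2+\alpha}$ the structure constants $b_0,c_0$ and the modulus $\omega_F$ do not scale down automatically---is the genuine issue, and your proposed resolution (absorbing the lower-order contributions into a $C^{0,\alpha}$-small forcing term using the smallness of $w_k$ itself) is the correct one. One small caveat: in the approximation lemma you should not impose $P(0)=0$ and $DP(0)=0$ a priori, since the Taylor polynomial of the limiting solution $u_\infty$ at the origin need not satisfy these; rather, the inductive hypothesis should only require that the coefficients of $P_k$ stay uniformly bounded (which follows from the geometric decay $\|P_{k+1}-P_k\|\lesssim r_k^\alpha$). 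With that adjustment, your sketch is sound.
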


    \section{Proof of Main Theorems}\label{Section: Proof of Main Theorems}

    \emph{Proof of Theorem \ref{THM: main thm} and Theorem \ref{THM: est for dynamic semi-convex solus}.} After scaling $16u(x/4)$, we will prove these theorems on  $B_{4}$.
    
    \emph{Step 1.} If the Hessian estimate were false, there would exist sequences of $\{u_{k}\}$ and $\{f_{k}\}$ such that:

    $i)$ $u_{k}$ is a $2$-convex smooth solution to  $\sigma_{2}(D^2u_{k})=f_{k}(x,u_{k},Du_{k})\ \mathrm{on}\ B_{4}$ in either dimension $n=4$ or in higher dimensions $n\ge 5$ with $u_{k}$ also satisfies the dynamic semi-convex condition \eqref{eq: dynamic semi-convex condition};

    $ii)$ $\|u_{k}\|_{C^{1}(B_{4})}+\|f_{k}\|_{C^{1,1}(B_{4}\times \bb{R}\times \bb{R}^n)}\leq A $ and $\inf f_{k}\ge f_{0}>0$;

    $iii)$ but the Hessian $|D^2u_{k}(0)|\to\infty$ as $k\to\infty$.

    By ii), applying Arzela-Ascoli, up to subsequence, $u_{k}$ uniformly converges $u$ on $B_{3}$ for some Lipschitz $u$, and $f_{k}$ converges to $f$ in $C^{1,\alpha}(B_{3})$ for any $0<\alpha<1$ for some $f\in C^{1,1}$. By the closedness of viscosity solutions (see \cite{CCbook}), $u$ is a viscosity solution to $\sigma_{2}(D^2u)=f(x,u,Du)$ on $B_{3}$. Since the limit solution $u$ is Lipschitz, we know that $f(x,u,Du)\in L^{\infty} $. By the Alexandrov regularity proposition \ref{PROP: Alexandrov type thm}, $u$ is twice differentiable almost everywhere. We fix a such twice differentiable point $y\in B_{1/3}$ and let $Q(x)$ be the quadratic polynomial such that $|u(x)-Q(x)|=o(|x-y|^2)$ near $y$. Notice that $\sigma_{2}(D^2Q)=f(y,Q(y),DQ(y))=f(y,u(y),Du(y))$.

    \emph{Step 2.} Set $v_{k}=u_{k}-Q$. For small $r>0$, consider the rescaled functions
    \[ \widetilde{v}_{k}(x)=\dfrac{1}{r^2}v_{k}(rx+y)\quad for\ x\in B_{1}. \]
    Then 
    \begin{align*}
        \sigma_{2}(D^2\widetilde{v}_{k}(x)+D^2Q)&=\sigma_{2}(D^2u_{k}(rx+y))=f_{k}(rx+y,u_{k}(rx+y),Du_{k}(rx+y))\\
        &= f_{k}(rx+y, r^2\widetilde{v}_{k}(x)+ Q(rx+y), rD\widetilde{v}_{k}(x)+DQ(rx+y))\\
        &:=\widetilde{f}_{k}(x,\widetilde{v}_{k},D\widetilde{v}_{k}), 
    \end{align*}
    where $\widetilde{f}_{k}(x,z,p)=f_{k}(rx+y, r^2z+ Q(rx+y) , rp + DQ(rx+y)).$
    
    Define the operator
    \[ G(M,p,z,x):=\sigma_{2}(M+D^2Q)-\sigma_{2}(D^2Q)-\widetilde{f}_{k}(x,z,p)+\widetilde{f}_{k}(x,0,0) \]
    for $(M,p,z,x)\in \mathcal{S}^{n\times n}\times \bb{R}^n \times \bb{R} \times B_1$.
    Clearly, $G$ satisfies the hypotheses H1)-H4) of Theorem \ref{THM: Generalized Savin's thm}. Let $\delta, C$ be the constants in Theorem \ref{THM: Generalized Savin's thm}. Now $\widetilde{v}_{k}$ solves 
    \begin{align*}
        G(D^2\widetilde{v}_{k},D\widetilde{v}_{k},\widetilde{v}_{k},x)&=\widetilde{f}_{k}(x,0,0)-\sigma_{2}(D^2Q)\\
        &= f_{k}(rx+y,Q(rx+y), DQ(rx+y))-f(y,Q(y),DQ(y))\\
        &:=\widehat{f}_{k}(x)
    \end{align*}
    Next, we verify the conditions in Theorem \ref{THM: Generalized Savin's thm}. 
    \begin{align*}
        \|\widetilde{v}_{k}\|_{L^{\infty}(B_{1})}&\leq \dfrac{\|u_{k}-u\|_{L^{\infty}(B_{r}(y))}}{r^2}+\dfrac{\|u-Q\|_{L^{\infty}(B_{r}(y))}}{r^2}\\
        &\leq \dfrac{\|u_{k}-u\|_{L^{\infty}(B_{r}(y))}}{r^2}+\sigma(r),
    \end{align*}
    for some modulus $\sigma(r)=o(r^2)/r^2$, 
    \begin{align*}
        |G(M,p,z,x)-G(M,p,z,x')|&\leq |\widetilde{f}_{k}(x,z,p)-\widetilde{f}_{k}(x',z,p)|+|\widetilde{f}_{k}(x,0,0)-\widetilde{f}(x',0,0)|\\
        &\leq C_{1}r^{\alpha}|x-x'|^{\alpha},
    \end{align*}
    and
    \begin{align*}
    \|\widehat{f}_{k}\|_{C^{0,\alpha}(B_1)}&\leq \|f_{k}(rx+y,Q(rx+y),DQ(rx+y))-f_{k}(y,Q(y),DQ(y))\|_{C^{0,\alpha}(B_1)}\\
    &\quad +|f_{k}(y,Q(y),DQ(y))-f(y,Q(y),DQ(y))|\\
    &\leq C_{2}r^{\alpha}+|f_{k}(y,Q(y),DQ(y))-f(y,Q(y),DQ(y))|,
    \end{align*}
    where $C_{1}$ depends on $A$ and $C_{2}$ depends on $A$, $|Du(y)|$, $|D^2u(y)|$. We emphasize that $C_{1}$ and $C_{2}$ are independent on $k$ and $r$. We fix $r=r(\delta ,\sigma,C_{1}, C_{2}):=\rho$ sufficiently small, such that $\sigma(\rho)+C_{1}\rho^{\alpha}+C_{2}\rho^{\alpha} \leq \delta/2$. Then for sufficiently large $k$, we have 
    \[ |G(M,p,z,x)-G(M,p,z,x')|\leq \delta|x-x'|^{\alpha}, \]
    and
    \[  \|\widetilde{v}_{k}\|_{L^{\infty}(B_{1})}\leq \delta,\qquad \|\widehat{f}_{k}\|_{C^{0,\alpha}(B_{1})}\leq\delta. \]
    Hence, by Theorem \ref{THM: Generalized Savin's thm}, we obtain $\|\widetilde{v}_{k}\|_{C^{2,\alpha}(B_{1/2})}\leq C$, where $C$ is independent on $k$.
    This implies  
    \[\Delta u_{k}\leq C\quad \text{on}\  B_{\rho/2}(y)\ \text{ uniformly in}\ k.\]
    
    \emph{Step 3.} Finally, we apply the doubling inequality finitely many times to propagate the uniform bound for $\Delta u_{k}$ to the origin. Indeed, denote $N=[\log_{2}\rho^{-1}]+1$.  By Prop \ref{PROP: Doubling inequality}, we obtain
    \[ \sup_{B_{1}(y)}\Delta u_{k}\leq \sup_{B_{2^{N}\rho}(y)}\Delta u_{k} \leq C_{0} \sup_{B_{2^{N-1}\rho}(y)}\Delta u_{k} \leq \cdots\leq  C_{0}C_{1}\cdots C_{N}\sup_{B_{\rho/2}(y)}\Delta u_{k}\leq \widetilde{C}, \]
    where $\widetilde{C}$ is independent on $k$. This contradicts with $iii)\ |D^2u_{k}(0)|\to\infty$. \hfill\qed

\bibliographystyle{amsalpha}
\bibliography{ref}
\end{document}